\definecolor{vert}{rgb}{0,0.6,0}
\newcommand{\R}{\mathbb{R}}
\newcommand{\cA}{\mathcal{A}}
\newcommand{\ep}{\varepsilon}
\newcommand{\AC}{{\rm AC\,}}
\newcommand{\ol}{\overline}
\newcommand{\ul}{\underline}
\newcommand{\argmax}{\operatorname{argmax}}
\newcommand{\supp}{\operatorname{supp}}
\renewcommand{\div}{\operatorname{div}}
\def\leq{\leqslant}
\def\geq{\geqslant}
\numberwithin{equation}{section}
\newtheoremstyle{thmlemcorr}{10pt}{10pt}{\itshape}{}{\bfseries}{.}{10pt}{{\thmname{#1}\thmnumber{
#2}\thmnote{ (#3)}}}
\newtheoremstyle{thmlemcorr*}{10pt}{10pt}{\itshape}{}{\bfseries}{.}\newline{{\thmname{#1}\thmnumber{
\newtheoremstyle{defi}{10pt}{10pt}{\itshape}{}{\bfseries}{.}{10pt}{{\thmname{#1}\thmnumber{
#2}\thmnote{ (#3)}}}
\newtheoremstyle{remexample}{10pt}{10pt}{}{}{\bfseries}{.}{10pt}{{\thmname{#1}\thmnumber{
#2}\thmnote{ (#3)}}}
\newtheoremstyle{ass}{10pt}{10pt}{}{}{\bfseries}{.}{10pt}{{\thmname{#1}\thmnumber{
A#2}\thmnote{ (#3)}}}
\theoremstyle{thmlemcorr}
\newtheorem{theorem}{Theorem}
\numberwithin{theorem}{section}
\newtheorem{lemma}[theorem]{Lemma}
\newtheorem{corollary}[theorem]{Corollary}
\newtheorem{proposition}[theorem]{Proposition}
\theoremstyle{thmlemcorr*}
\newtheorem{theorem*}{Theorem}
\newtheorem{lemma*}[theorem]{Lemma}
\newtheorem{corollary*}[theorem]{Corollary}
\newtheorem{proposition*}[theorem]{Proposition}
\newtheorem{problem*}[theorem]{Problem}
\newtheorem{conjecture*}[theorem]{Conjecture}
\theoremstyle{defi}
\newtheorem{definition}[theorem]{Definition}
\theoremstyle{remexample}
\newtheorem{remark}{Remark}
\newtheorem{example}[theorem]{Example}
\theoremstyle{plain}
\theoremstyle{ass}
\begin{document}

\title[Forced mean curvature flow]{Level-set forced mean curvature flow with the Neumann boundary condition}

\author{Jiwoong Jang}
\address[Jiwoong Jang]
{
	Department of Mathematics, 
	University of Wisconsin Madison, Van Vleck hall, 480 Lincoln drive, Madison, WI 53706, USA}
\email{jjang57@wisc.edu}

\author{Dohyun Kwon}
\address[Dohyun Kwon]
{
	Department of Mathematics, 
	University of Wisconsin Madison, Van Vleck hall, 480 Lincoln drive, Madison, WI 53706, USA}
\email{dkwon7@wisc.edu}

\author{Hiroyoshi Mitake}
\address[H. Mitake]{
	Graduate School of Mathematical Sciences, 
	University of Tokyo 
	3-8-1 Komaba, Meguro-ku, Tokyo, 153-8914, Japan}
\email{mitake@g.ecc.u-tokyo.ac.jp}

\author{Hung V. Tran}
\address[Hung V. Tran]
{
	Department of Mathematics, 
	University of Wisconsin Madison, Van Vleck hall, 480 Lincoln drive, Madison, WI 53706, USA}
\email{hung@math.wisc.edu}

\thanks{
	The work of JJ was partially supported by NSF CAREER grant DMS-1843320.
	The work of HM was partially supported by the JSPS grants: KAKENHI \#19K03580, \#19H00639, \#17KK0093, \#20H01816. 
	The work of HT was partially supported by  NSF CAREER grant DMS-1843320 and a Simons Fellowship.
}

\date{\today}
\keywords{Level-set forced mean curvature flows; Neumann boundary problem; global Lipschitz regularity; large time behavior; the large time profile}
\subjclass[2010]{
	35B40, 
	49L25, 
	53E10, 
	35B45, 
	35K20, 
	35K93, 
}

\begin{abstract}
Here, we study a level-set forced mean curvature flow with the homogeneous Neumann boundary condition.
We first show that the solution is Lipschitz in time and locally Lipschitz in space.
Then, under an additional condition on the forcing term, we prove that the solution is globally Lipschitz.
We obtain the large time behavior of the solution in this setting and study the large time profile in some specific situations.
Finally, we give two examples demonstrating that the additional condition on the forcing term is sharp, 
and without it, the solution might not be globally Lipschitz.
\end{abstract}

\maketitle


\section{Introduction} \label{sec:intro}
 In this paper, we study the level-set equation for the forced mean curvature flow
\begin{numcases}{}
u_t=|D u|\div \left(\frac{D u}{|D u|}\right)+c(x)|D u|  & \text{in}  $\Omega\times(0,\infty)$, \label{eq:levelset1}\\
\displaystyle \frac{\partial u}{\partial \Vec{\mathbf{n}}}=0 & \mbox{on}  $\partial\Omega\times[0,\infty)$, \label{eq:levelset2}\\
u(x,0)=u_0(x) & \mbox{on} $ \overline{\Omega}$.
\label{eq:levelset3} 
\end{numcases}
The domain $\Omega \subset \R^n$ with $n\ge2$ is assumed to be bounded and $C^{2,\theta}$ for some $\theta \in (0,1)$.
Here, $c=c(x)$ is a  forcing function, which is in $C^{1}(\overline{\Omega})$, and $\Vec{\mathbf{n}}$ is the outward unit normal vector to $\partial\Omega$. 
Throughout this paper, we assume that $u_0\in C^{2,\theta}(\overline{\Omega})$, and $\frac{\partial u_0}{\partial \Vec{\mathbf{n}}}=0\ \text{on}\ \partial\Omega$ for compatibility. 

We first notice that the well-posedness and the comparison principle for \eqref{eq:levelset1}--\eqref{eq:levelset3} are well established in the theory of viscosity solutions (see \cite{ES, CGG, GS, GS2} for instance). Our main interest in this paper is to go beyond the well-posedness theory to understand the Lipschitz regularity and large time behavior of the solution. The Lipschitz regularity for the solution is rather subtle because of the competition between the forcing term and the mean curvature term together with the constraint on perpendicular intersections of the level sets of the solution with the boundary of $\Omega$. It is worth emphasizing that the geometry of $\partial \Omega$ plays a crucial role in the analysis.

\medskip
We now describe our main results. First of all, we show that $u$ is Lipschitz in time and locally Lipschitz in space. 
\begin{theorem}\label{thm:local-grad}
Let $u$ be the unique viscosity solution $u$ of \eqref{eq:levelset1}--\eqref{eq:levelset3}.
Then, there exists a constant $M>0$ and for each $T>0$, there exists a constant $C_T>0$ depending on $T$ such that 
\[
\begin{cases}
|u(x,t)-u(x,s)|\leq M|t-s|,\\
|u(x,t)-u(y,t)|\leq C_T|x-y|,
\end{cases}
\quad \text{ for all $x,y\in\overline{\Omega}$, $t,s\in [0,T]$.}
\]
\end{theorem}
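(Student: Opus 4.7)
The plan is to establish time regularity first by a barrier argument, then spatial regularity by a doubling-of-variables argument adapted to the Neumann boundary.

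\textbf{Time Lipschitz.} Exploiting $u_0\in C^{2,\theta}(\overline{\Omega})$ together with the compatibility $\frac{\partial u_0}{\partial \vec{\mathbf{n}}}=0$, I would set
\[
M:=\sup_{x\in\overline{\Omega},\ \nu\in \bS^{n-1}}\bigl|\tr\bigl((I-\nu\otimes\nu)D^2u_0(x)\bigr)\bigr|+\|c\|_{L^\infty(\overline{\Omega})}\|Du_0\|_{L^\infty(\overline{\Omega})},
\]
and verify that $w^\pm(x,t):=u_0(x)\pm Mt$ are a super- and a subsolution of \eqref{eq:levelset1}--\eqref{eq:levelset2}. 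Where $Du_0\neq 0$ this is a direct computation; at critical points of $u_0$ the level-set envelope of the mean curvature operator is controlled by the eigenvalues of $D^2u_0$, yielding the same bound in the viscosity sense, while the compatibility takes care of the boundary condition. The comparison principle then gives $|u(x,t)-u_0(x)|\leq Mt$. Since $v(x,t):=u(x,t+s)$ solves the same Neumann problem with initial datum $u(\cdot,s)$, a second comparison (between $v$ and $u$) upgrades this to $|u(x,t)-u(x,s)|\leq M|t-s|$ for all $x\in\overline{\Omega}$ and $t,s\geq 0$.

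\textbf{Spatial Lipschitz.} Fix $T>0$. Using that $\partial\Omega$ is $C^{2,\theta}$, I pick $d\in C^{2,\theta}(\overline{\Omega})$ agreeing with the signed distance to $\partial\Omega$ near the boundary, so that $Dd=-\vec{\mathbf{n}}$ on $\partial\Omega$, and, for parameters $L,K,\beta,\kappa>0$ to be chosen, look at the doubled functional
\[
\Phi(x,y,t):=u(x,t)-u(y,t)-L e^{\kappa t}\psi(x,y),\qquad \psi(x,y):=\bigl(|x-y|+\beta|x-y|^2\bigr)\bigl(1+K(d(x)+d(y))\bigr),
\]
on $\overline{\Omega}^2\times[0,T]$. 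The role of $K,\beta$ is twofold: first, $\psi(x,y)\geq c_0|x-y|$ so that $\sup\Phi\leq 0$ forces the desired Lipschitz estimate; second, $D_x\psi(x,y)\cdot\vec{\mathbf{n}}(x)>0$ for $x\in\partial\Omega$ and $y\neq x$ nearby (and symmetrically in $y$), which prevents any positive maximum of $\Phi$ from being attained on the boundary.

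\textbf{Deriving the bound.} If $\sup\Phi\leq 0$, there is nothing to prove. Otherwise, at a positive maximum $(\bar x,\bar y,\bar t)$ the boundary property of $\psi$ rules out $\bar x\in\partial\Omega$ or $\bar y\in\partial\Omega$, since testing the Neumann inequality against $L e^{\kappa\bar t}\psi$ would give the wrong sign; taking $L$ larger than $\|Du_0\|_{L^\infty}/c_0$ further forces $\bar t>0$. At such an interior extremum I would invoke the Crandall--Ishii lemma to produce jets $(p_x,X)\in\overline{\cP}^{2,+}u(\bar x,\bar t)$ and $(p_y,Y)\in\overline{\cP}^{2,-}u(\bar y,\bar t)$ with $p_x,p_y$ proportional to $Le^{\kappa\bar t}$ and comparable up to lower order. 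Substituting into \eqref{eq:levelset1} and using the matrix inequality to bound the difference of the two mean curvature operators in terms of $\tr(X-Y)$, together with $|c(\bar x)-c(\bar y)|\leq \|Dc\|_{L^\infty}|\bar x-\bar y|$, yields an upper estimate on the time-derivative contribution $\kappa L e^{\kappa\bar t}\psi(\bar x,\bar y)$; choosing $\kappa$ sufficiently large (depending on $\|Dc\|_{L^\infty}$, $K$, and the $C^2$ geometry of $\partial\Omega$) forces a contradiction. The resulting Lipschitz constant is $C_T=Le^{\kappa T}$.

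\textbf{Main obstacle.} The crux is the engineering of $\psi$: it has to simultaneously dominate $|x-y|$, have strictly outward normal derivatives in each variable on $\partial\Omega$, and produce second-order terms controllable by the Crandall--Ishii matrix inequality after including the extra factor $1+K(d(x)+d(y))$. The singular behavior of the mean curvature operator at $Du=0$ is fortunately sidestepped, because the gradient at the doubled maximum is of order $L|\bar x-\bar y|\neq 0$. The exponential growth $C_T\sim e^{\kappa T}$ is the price paid for a generic forcing $c$; removing it is exactly what the extra hypothesis on $c$ in the paper's next result is designed to do.
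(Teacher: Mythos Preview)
Your approach is genuinely different from the paper's. For the spatial estimate the paper does not work directly with the viscosity solution at all: it passes to the uniformly parabolic approximation \eqref{eq:levelset-ep}, proves an a priori bound on $w^\varepsilon=\sqrt{\varepsilon^2+|Du^\varepsilon|^2}$ by the classical Bernstein method, and handles a possible boundary maximum of $w^\varepsilon$ by multiplying by a paraboloidal weight centred at an inscribed ball, which forces the maximizer of the product strictly into $\Omega$ (proof of Theorem~\ref{thm:local-grad-ep}); the estimate then passes to $u$ by stability. Your barrier argument for the time Lipschitz bound is correct and is in fact more direct than the paper's, which instead differentiates \eqref{eq:levelset-ep} in $t$ and applies the maximum principle to $u_t^\varepsilon$.

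Your doubling-of-variables scheme for the spatial bound, however, has two concrete problems as written. First, the boundary weight has the wrong sign: with $d>0$ in $\Omega$ and $Dd=-\vec{\mathbf n}$ on $\partial\Omega$, your $\psi$ gives, for $\bar x\in\partial\Omega$,
\[
D_x\psi(\bar x,\bar y)\cdot\vec{\mathbf n}(\bar x)=\phi'(|\bar x-\bar y|)\,\frac{(\bar x-\bar y)\cdot\vec{\mathbf n}(\bar x)}{|\bar x-\bar y|}\bigl(1+Kd(\bar y)\bigr)-K\phi(|\bar x-\bar y|),
\]
and since the $C^{2}$ boundary only guarantees $(\bar x-\bar y)\cdot\vec{\mathbf n}(\bar x)\ge -C|\bar x-\bar y|^2$ for $\bar y\in\overline{\Omega}$, both terms can be negative; you need $1-K(d(x)+d(y))$ (or an exponential weight) to get the sign that rules out the Neumann alternative. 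Second, and more serious, the sentence ``using the matrix inequality to bound the difference of the two mean curvature operators in terms of $\tr(X-Y)$'' hides the real obstacle. The boundary correction forces $\hat p_x\neq\hat p_y$, and the naive splitting produces an error $\tr\bigl((\hat p_y\otimes\hat p_y-\hat p_x\otimes\hat p_x)Y\bigr)$ of size $|\hat p_x-\hat p_y|\cdot\|Y\|$; here $|\hat p_x-\hat p_y|=O(|\bar x-\bar y|)$, but the Ishii lemma only gives $\|Y\|\le CLe^{\kappa\bar t}/|\bar x-\bar y|$ (the $|x-y|$ part of $\psi$ has Hessian of order $|\bar x-\bar y|^{-1}$), so this error is $O(Le^{\kappa\bar t})$ and cannot be absorbed by taking $\kappa$ large. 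Closing the estimate requires testing the matrix inequality with rotated orthonormal frames adapted to $\hat p_x$ and $\hat p_y$ separately so that the test vectors differ by $O(|\bar x-\bar y|)$; this device is the missing ingredient in your sketch, and without it the contradiction at the doubled maximum does not materialize. It is precisely this degeneracy that the paper sidesteps by working on the regularized, uniformly parabolic problem.
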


We next show that if we put some further conditions on the forcing term $c$, then we have the global Lipschitz estimate in $x$ of the solution.
Denote by
\begin{equation*}
\begin{cases}
C_0:=\max\{-\lambda\,:\,\lambda\ \textrm{is a principal curvature of $\partial \Omega$ at $x_0$ for $x_0 \in \partial  \Omega$}\}\in\mathbb{R},\\
K_0:=\min\{d\,:\, d\  \textrm{is the diameter of an open ball inscribed in}\ \Omega\}>0.
\end{cases}
\end{equation*}

\begin{theorem}\label{thm:global-grad}
Assume that  there exists $\delta>0$ such that
\begin{equation}\label{condition:c}
\frac{1}{n}c(x)^2-|Dc(x)|-\delta>\max\left\{0,\ C_0|c(x)|+\frac{2nC_0}{K_0}\right\}\quad\text{for all}\ x\in\Omega.
\end{equation}
Let $u$ be the unique viscosity solution to \eqref{eq:levelset1}--\eqref{eq:levelset3}.
 Then, there exist constants $M,L>0$ depending only on the forcing term $c$ and the initial data $u_0$ such that
\begin{equation}
\label{eq:lip}
\begin{cases}
|u(x,t)-u(x,s)|\leq M|t-s|,\\
|u(x,t)-u(y,t)|\leq L |x-y|,
\end{cases}
\quad \text{ for all $x,y\in\overline{\Omega}$, $t,s\in[0,\infty)$.} 
\end{equation}
\end{theorem}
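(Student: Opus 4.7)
\emph{Time Lipschitz.} My plan starts with the time estimate, which is inherited from $u_0$. Since $u_0\in C^{2,\theta}(\overline{\Omega})$ with $\partial u_0/\partial\Vec{\mathbf{n}}=0$, a direct substitution verifies that $u_0(x)\pm Mt$ is a classical super-/subsolution of \eqref{eq:levelset1}--\eqref{eq:levelset3} for
\[
M := \bigl\|\tr\bigl((I-\nu_0\otimes\nu_0)D^2u_0\bigr)\bigr\|_{L^\infty(\Omega)} + \|c\|_{L^\infty}\|Du_0\|_{L^\infty},
\]
where $\nu_0 = Du_0/|Du_0|$ and critical points of $u_0$ are handled using its $C^2$ bound. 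The comparison principle gives $|u(x,t)-u_0(x)|\le Mt$, and the semigroup property upgrades this to $|u(x,t)-u(x,s)|\le M|t-s|$ for all $s,t\ge 0$.

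\emph{Space Lipschitz via doubling of variables.} For $L>0$ to be chosen large, $T>0$ large, and small $\beta,\epsilon,\sigma>0$, I would analyze
\[
\Phi(x,y,t) := u(x,t) - u(y,t) - L\varphi_\sigma(|x-y|) - \beta\bigl(\xi(x)+\xi(y)\bigr) - \frac{\epsilon}{T-t}
\]
on $\overline{\Omega}\times\overline{\Omega}\times[0,T)$, where $\varphi_\sigma(r):=\sqrt{r^2+\sigma^2}-\sigma$ smoothes $r\mapsto r$ and $\xi\in C^2(\overline{\Omega})$ is a boundary corrector with $\partial\xi/\partial\Vec{\mathbf{n}}\ge 1$ on $\partial\Omega$ and $\|D^2\xi\|_{L^\infty}\lesssim |C_0|+1/K_0$, obtained by smoothing the signed distance to $\partial\Omega$, using the $C^{2,\theta}$-regularity of $\partial\Omega$ and the inscribed-ball diameter $K_0$. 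The corrector $\xi$ deactivates the Neumann condition at a maximizer: if $\Phi$ were maximized at $\hat x\in\partial\Omega$, the viscosity Neumann test combined with $D\xi\cdot\Vec{\mathbf{n}}\ge 1$ would force $\beta\le 0$, which is impossible. Consequently, at a positive maximizer $(\hat x,\hat y,\hat t)$ one has $\hat x\ne\hat y$, $\hat t\in(0,T)$, and $\hat x,\hat y\in\Omega$.

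\emph{Closing via Crandall--Ishii.} The parabolic Crandall--Ishii lemma produces semi-jets $(a,p,X)\in\overline{\mathcal{P}}^{2,+}u(\hat x,\hat t)$ and $(b,q,Y)\in\overline{\mathcal{P}}^{2,-}u(\hat y,\hat t)$ with $a-b=\epsilon/(T-\hat t)^2>0$, with $p,q$ close to $L\hat q$ where $\hat q:=(\hat x-\hat y)/|\hat x-\hat y|$, and with the standard control on $X-Y$ by the Hessian of the test function. Subtracting the equations at the two points, the MCF-trace difference is absorbed by the matrix inequality restricted to $\hat q^\perp$, while the Bernstein-type bound $\tr(A)^2/n\le|A|^2$, applied to the tangential Hessian $A$ whose trace equals $u_t-c|Du|$ by the PDE, extracts a coercive contribution of order $(1/n)c(\hat x)^2 L^2$. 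Balancing this against the forcing mismatch $|Dc|\,L^2|\hat x-\hat y|$ and the boundary contribution $\beta(C_0|c|+2nC_0/K_0)L$ generated by $D^2\xi$ and the non-convexity of $\partial\Omega$, hypothesis \eqref{condition:c} forces a contradiction for $L$ sufficiently large (depending only on $u_0$ and $c$). Sending $\sigma,\beta,\epsilon\to 0$ and $T\to\infty$ yields \eqref{eq:lip}.

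\emph{Main obstacle.} The crux is geometric: the Neumann corrector $\xi$ cannot simultaneously have small normal derivative on $\partial\Omega$ and small Hessian on non-convex portions of $\partial\Omega$, and $D^2\xi$ interacts with both $c$ and the MCF trace in the Crandall--Ishii step. The explicit threshold $\frac{1}{n}c^2-|Dc|-\delta>\max\{0,C_0|c|+2nC_0/K_0\}$ is exactly what matches these effects term by term — the $C_0|c|$ piece from the interaction of $c$ with $\partial\Omega$'s curvature, the $2nC_0/K_0$ piece from $\|D^2\xi\|_\infty$ via the inscribed-ball diameter — and the sharpness examples announced later in the paper confirm that this balance cannot be loosened.
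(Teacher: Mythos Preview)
Your time-Lipschitz argument is fine and is a standard alternative to the paper's (which differentiates the regularized equation in $t$ and applies the maximum principle to $u^\varepsilon_t$).

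The spatial part, however, has a real gap. In the doubling framework you set up, after applying the Crandall--Ishii lemma and subtracting the two viscosity inequalities, what you control is
\[
0<\frac{\epsilon}{(T-\hat t)^2}\ \le\ \tr\!\big((I-\hat p\otimes\hat p)X\big)-\tr\!\big((I-\hat q\otimes\hat q)Y\big)+c(\hat x)|p|-c(\hat y)|q|+O(\beta),
\]
and the matrix inequality only bounds the \emph{trace difference} $\tr\big((I-\hat q\otimes\hat q)(X-Y)\big)$. Nowhere does a term of the type $|A|^2=b^{ij}b^{kl}u_{ki}u_{lj}$ appear. The Cauchy--Schwarz step $|A|^2\ge(\tr A)^2/n=(u_t-c|Du|)^2/n$ that you invoke is exactly the Bernstein mechanism, but that term lives in the PDE satisfied by $w=|Du|$ (obtained by differentiating the equation for $u$), not in the subtraction of the equation for $u$ at two points. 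You cannot ``extract'' $(1/n)c(\hat x)^2L^2$ from the Crandall--Ishii data $(X,Y)$; knowing that $\tr A$ is large in modulus says nothing useful once you subtract. A symptom of the problem is already visible in your bookkeeping: the boundary contribution you record carries a prefactor $\beta$, so it vanishes as $\beta\to0$ and would make the hypothesis \eqref{condition:c} irrelevant---in contradiction with the sharpness examples (Corollary~\ref{thm:non-Lip-rad} and Theorem~\ref{thm:conv}).

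The paper's route is genuinely different: it passes to the smooth approximation $u^\varepsilon$, derives the equation \eqref{eq:w} for $w^\varepsilon=\sqrt{\varepsilon^2+|Du^\varepsilon|^2}$, and applies the \emph{classical} Bernstein method at a maximum of $w^\varepsilon$. The quadratic good term $b^{ij}b^{kl}u_{ki}u_{lj}$ then appears honestly and yields $(1/n)(u_t-cw)^2\le|Dc|\,w^2$ at an interior max. The boundary case is handled not by an additive corrector $\beta\xi$ but by an $O(1)$ multiplicative paraboloidal weight $\rho(x)=-\tfrac{C_0}{K_0}|x-x_c|^2+\tfrac{C_0K_0}{4}+1$, centered at the inscribed-ball center tangent to $\partial\Omega$ at the boundary max; this forces the maximum of $\rho w$ strictly inside $\Omega$ and produces precisely the extra $C_0|c|+\tfrac{2nC_0}{K_0}$ on the right-hand side, which is why \eqref{condition:c} is the correct threshold. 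If you want to salvage your outline, you would have to double variables on $w^\varepsilon$ (or equivalently work at the level of the approximated flow), not on $u$ itself.
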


Let us now explain a bit the geometric meaning of $K_0$.
For each $x\in \partial \Omega$, let 
\[
K_x = \max\{ 2r>0\,:\, B(x-r\Vec{\mathbf{n}}(x), r) \subset \Omega\}.
\] 
Then, $K_0 = \min_{x \in \partial \Omega} K_x$.
We notice next that  if $\Omega$ is convex in Theorem \ref{thm:global-grad}, then we clearly have $C_0\le0$. 
In this case, \eqref{condition:c} becomes $\frac{1}{n} c(x)^2 - |Dc(x)| -\delta>0$, a kind of coercive assumption, which often appears in the usage of the classical Bernstein method to obtain Lipschitz regularity (see \cite{LS2005} for instance).

In the specific case where $c\equiv 0$ and $\Omega$ is convex and bounded, the global Lipschitz estimate of the solution was obtained in \cite{GOS}.
See Remark \ref{rem:GOS}.
Moreover, a very interesting example was given in \cite{GOS} to show that the solution is not globally Lipschitz continuous if $\Omega$ is not convex.
Motivated by this example, we give two examples showing that $u$ is not globally Lipschitz continuous if we do not impose \eqref{condition:c}.
Furthermore, the examples demonstrate that condition \eqref{condition:c} is sharp.

Let us note that the graph mean curvature flow with the Neumann boundary conditions has been studied much in the literature (see \cite{H, Guan, MWW} and the references therein). 

\medskip

We next study the large time behavior of $u$ under condition  \eqref{condition:c}.
\begin{theorem}\label{thm:asym}
Assume \eqref{condition:c}.
Let $u$ be the unique viscosity solution to \eqref{eq:levelset1}--\eqref{eq:levelset3}.
Then,
\[
u(\cdot,t)\rightarrow v,\quad \text{ as } t\rightarrow \infty,
\]
uniformly on $\overline{\Omega}$ for some Lipschitz function $v$, which is a viscosity solution to 
\begin{equation}\label{eq:stationary}
\begin{cases}
-\left(\div \left(\frac{D v}{|D v|}\right)+c(x)\right)|D v|=0 \quad \ &{\rm in } \ \Omega,\\
\displaystyle \frac{\partial v}{\partial \Vec{\mathbf{n}}}=0\quad \ &{\rm on } \ \partial\Omega.
\end{cases}
\end{equation}
\end{theorem}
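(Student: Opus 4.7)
My plan combines precompactness of the orbit $\{u(\cdot,t)\}_{t\ge 0}$ in $C(\overline\Omega)$ with a monotonicity argument identifying every $\omega$-limit point as a stationary solution of~\eqref{eq:stationary}, and then upgrades subsequential convergence to full convergence via the non-expansiveness of the semi-flow inherited from the comparison principle.

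\emph{Precompactness and invariance.} Theorem~\ref{thm:global-grad} supplies a uniform Lipschitz bound in $x$ and a Lipschitz bound in $t$ for $u$. Since constants are solutions of \eqref{eq:levelset1}--\eqref{eq:levelset2}, the comparison principle yields $\min_{\overline\Omega} u_0\le u(x,t)\le \max_{\overline\Omega}u_0$. Arzel\`a--Ascoli then makes $\{u(\cdot,t)\}_{t\ge 0}$ relatively compact in $C(\overline\Omega)$, so the $\omega$-limit set
\[
\omega(u_0):=\bigl\{v\in C(\overline\Omega):\,u(\cdot,t_n)\to v\text{ uniformly for some }t_n\to\infty\bigr\}
\]
is non-empty and compact. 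Writing $S_\tau$ for the semi-flow of \eqref{eq:levelset1}--\eqref{eq:levelset2}, comparison yields the non-expansive estimate $\|S_\tau\zeta_1-S_\tau\zeta_2\|_\infty\le\|\zeta_1-\zeta_2\|_\infty$, and stability of viscosity solutions makes $\omega(u_0)$ invariant under $S_\tau$.

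\emph{Stationarity of $\omega$-limits.} For $h>0$, set $\phi_h(t):=\max_{\overline\Omega}[u(\cdot,t+h)-u(\cdot,t)]$ and $\psi_h(t):=\min_{\overline\Omega}[u(\cdot,t+h)-u(\cdot,t)]$. Invariance of the equation under additive constants, combined with comparison applied at any time $t_0$, makes $\phi_h$ non-increasing and $\psi_h$ non-decreasing in $t$; both converge to limits $\phi_h^\infty,\psi_h^\infty$. The pointwise telescoping $u(x,Nh)-u(x,0)=\sum_{k=0}^{N-1}[u(x,(k+1)h)-u(x,kh)]$ with uniformly bounded left-hand side forces $\psi_h^\infty\le 0\le \phi_h^\infty$; ruling out strict inequality (see below) then yields $\phi_h^\infty=\psi_h^\infty=0$, so $\|u(\cdot,t+h)-u(\cdot,t)\|_\infty\to 0$ as $t\to\infty$. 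For $v\in\omega(u_0)$ with $u(\cdot,t_n)\to v$, stability gives $u(\cdot,t_n+h)\to S_h v$ uniformly, whence $\|S_h v-v\|_\infty=0$; thus $S_h v=v$ for every $h>0$, and $v$ is a viscosity solution of~\eqref{eq:stationary}.

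\emph{Full convergence and the main obstacle.} Since $v$ is stationary, non-expansiveness gives
\[
\|u(\cdot,t_n+\tau)-v\|_\infty=\|S_\tau u(\cdot,t_n)-S_\tau v\|_\infty\le \|u(\cdot,t_n)-v\|_\infty\to 0
\]
uniformly in $\tau\ge 0$, yielding $u(\cdot,t)\to v$ uniformly as $t\to\infty$. The hard part is excluding strict inequality in $\psi_h^\infty\le 0\le\phi_h^\infty$, equivalently ruling out a non-trivial recurrent trajectory in the compact $S_\tau$-invariant set $\omega(u_0)$. This is where condition~\eqref{condition:c} must enter, presumably through a Lyapunov functional for the flow: formally,
\[
E(t)=\int_\Omega|Du|\,dx-\int_\Omega c\,u\,dx
\]
satisfies $\frac{d}{dt}E=-\int_\Omega u_t^2/|Du|\,dx\le 0$ for smooth non-critical solutions (the Neumann boundary condition kills the boundary term after integration by parts), and its strict monotonicity away from stationary states, made rigorous by suitable regularization and the coercivity in~\eqref{condition:c}, eliminates any such recurrence.
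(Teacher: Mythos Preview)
Your overall architecture---precompactness of the orbit, identification of $\omega$-limit points as stationary solutions, and upgrade to full convergence via non-expansiveness of the semi-flow---is sound, and the last step (full convergence from subsequential convergence plus comparison) is exactly what the paper does. You also correctly pinpoint the decisive ingredient: the Lyapunov functional $E$, which you write down in essentially the same form the paper uses.

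Where your route differs is the middle step. The monotonicity apparatus with $\phi_h,\psi_h$ is a detour that, as you yourself note, does not close by itself; it only yields $\psi_h^\infty\le 0\le\phi_h^\infty$ and still needs the Lyapunov argument to rule out nontrivial recurrence. The paper bypasses this entirely: it computes the Lyapunov inequality at the regularized level, obtaining
\[
\frac{d}{dt}\left(\int_\Omega\sqrt{\ep^2+|Du^\ep|^2}\,dx-\int_\Omega c\,u^\ep\,dx\right)\le -\frac{1}{\sqrt{\ep^2+L^2}}\int_\Omega (u^\ep_t)^2\,dx,
\]
where $L$ is the global spatial Lipschitz bound from Theorem~\ref{thm:global-grad}. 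Integrating in $t$ and passing to the limit $\ep\to 0$ (via weak lower semicontinuity) gives $\int_0^\infty\!\int_\Omega u_t^2\,dx\,dt<\infty$ directly. This immediately forces $v_t=0$ for any subsequential limit $v$, with no need for the $\phi_h,\psi_h$ machinery.

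Two points worth absorbing from the paper's execution: first, the Lyapunov computation must be carried out on the smooth approximations $u^\ep$, not on $u$ itself, since $u$ is merely Lipschitz; your phrase ``made rigorous by suitable regularization'' is exactly right, but the paper shows concretely how. Second, condition~\eqref{condition:c} enters only through the uniform-in-time spatial Lipschitz bound $L$: it is what makes the coefficient $1/\sqrt{\ep^2+L^2}$ in the dissipation term uniformly positive, so that the $L^2$ bound on $u_t$ survives the limit $T\to\infty$. There is no separate ``coercivity'' role for \eqref{condition:c} in the Lyapunov step beyond furnishing $L$.
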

We prove Theorem \ref{thm:asym} by using a  Lyapunov function, which is quite standard. 
We say that $v$ is the large time profile of the solution $u$.
It is important to note that the stationary problem \eqref{eq:stationary} may have various different solutions, and thus, the question on how the large time profile $v$ depends on the initial data $u_0$ is rather delicate and challenging. 
We are able to answer this question in the radially symmetric setting, and it is still widely open in the general settings. 

\begin{theorem}\label{thm:radlimit}
Suppose that, by abuse of notions,
\begin{equation}\label{con:radial}
\begin{cases}
\Omega = B(0,R) \text{ for some } R>0,\\
c(x) = c(r) \text{ for } |x|=r \in [0, R],\\
u_0(x)= u_0(r)  \text{ for } |x|=r \in [0, R].
\end{cases}
\end{equation} 
Here, $c \in C^1([0,R], [0,\infty))$, and $u_0 \in C^2([0,R])$ with $u_0'(R)=0$.
Denote by
\begin{align*}
&\mathcal{A}:=\left\{r\in(0,R]\,:\,c(r)=\frac{n-1}{r}\right\},\\
 &\mathcal{A}_{+}:=\left\{r\in(0,R]\,:\,c(r)>\frac{n-1}{r}\right\},\\
 &\mathcal{A}_{-}:=\left\{r\in(0,R]\,:\,c(r)<\frac{n-1}{r}\right\}.
\end{align*}
Define $d:(0,R] \to (0,R]$ as
\[
d(r)=
\begin{cases}
r \qquad &\text{ if } r \in \cA,\\
\max \left (\cA \cap (0,r) \right) \qquad &\text{ if } r \in \cA_+,\\
\min \left (\cA \cap (r,R] \right) \qquad &\text{ if } r \in \cA_- \text{ and } \cA \cap (r,R] \neq \emptyset,\\
R \qquad &\text{ if } r \in \cA_- \text{ and } \cA \cap (r,R] = \emptyset.
\end{cases}
\]
Write $u(x,t)=\phi(|x|,t)$ for $x\in \Omega=B(0,R)$ and $t\geq 0$.
Then, the limiting profile $\phi_{\infty}(r)=\lim_{t\to\infty}\phi(r,t)$ can be written in terms of $u_0$ as:
for each $r_0 \in (0,R]$,

\begin{equation}\label{radlimit}
\phi_{\infty}(r_0)=\max\left\{ u_0(r)\,:\, r \geq d(r_0) \right\}.
\end{equation}

\end{theorem}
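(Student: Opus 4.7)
The plan is to exploit radial symmetry to reduce the PDE to a first-order Hamilton--Jacobi equation on $[0,R]$ and then identify the large-time limit via the geometric structure of the level sets, which are spheres evolving by a scalar ODE. By radial symmetry and uniqueness, $u(x,t)=\phi(|x|,t)$, and using $|Du|\operatorname{div}(Du/|Du|)=\frac{n-1}{r}\phi_r$ for radial $u$, one sees that $\phi$ is a viscosity solution of
\[
\phi_t=\frac{n-1}{r}\phi_r+c(r)|\phi_r|\quad\text{on }(0,R)\times(0,\infty),
\]
with Neumann condition $\phi_r(R,t)=0$. Each connected component of $\{\phi(\cdot,t)\geq s\}$ is an interval $[a(t),b(t)]\subseteq[0,R]$: the outer boundary $b$ evolves by $\dot b=c(b)-(n-1)/b$, whose equilibria are exactly $\mathcal{A}$ and whose flow sends $\mathcal{A}_+$ outward and $\mathcal{A}_-$ inward; any positive inner boundary shrinks by $\dot a=-(n-1)/a-c(a)$ and reaches $0$ in finite time, at which point the component merges into a ball.

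Next I would verify that the candidate limit $\psi(r_0):=\max\{u_0(r):r\geq d(r_0)\}$ is a viscosity solution of the stationary equation $\frac{n-1}{r}\psi_r+c(r)|\psi_r|=0$ with Neumann condition at $R$: continuity of $\psi$ follows from continuity of $u_0$ together with the fact that $d$ is constant on each connected component of $(0,R]\setminus\mathcal{A}$; $\psi$ is non-increasing and locally constant off $\mathcal{A}$ (so the equation holds classically there); and the viscosity sub/supersolution conditions at points of $\mathcal{A}$ are checked by inspection. Then $\phi_\infty=\psi$ is established by the comparison principle through two inequalities. For the lower bound $\phi_\infty(r_0)\geq u_0(r^*)$ when $r^*\geq d(r_0)$: for $\epsilon>0$, the connected component of $\{u(\cdot,t)>u_0(r^*)-\epsilon\}$ containing $r^*$ at $t=0$ has outer boundary evolving by the ODE for $b$, and by the definition of $d$ (together with inner-boundary annihilation at the origin and the Neumann reservoir at $R$ that preserves the value $u_0(R)$) it eventually covers $r_0$. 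For the upper bound $\phi_\infty(r_0)\leq\max u_0([d(r_0),R])$: if $s$ exceeds this maximum then $\{u_0\geq s\}\subset(0,d(r_0))$, and since $d(r_0)\in\mathcal{A}\cup\{R\}$ is an equilibrium barrier of the outer ODE, the evolved set stays within $[0,d(r_0))$ for all $t$, giving $\phi(r_0,t)<s$.

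Convergence $\phi(\cdot,t)\to\phi_\infty$ (prior to identification) follows from the convergence of each boundary ODE to its limit and Hausdorff convergence of the super-level sets; notably, this does not require the global Lipschitz estimate of Theorem~\ref{thm:global-grad} or the Lyapunov argument of Theorem~\ref{thm:asym}, since the ODE analysis suffices in the radial setting. The hard part will be the lower-bound construction in the non-monotone case: one must track, as $t\to\infty$, how the connected component carrying the maximal value $u_0(r^*)$ interacts with other components through topological transitions when inner boundaries annihilate at the origin, and how the Neumann-induced feedback at $R$ propagates $u_0(R)$ inward through the $\mathcal{A}_-$ flow, ensuring that the maximal value survives these changes and reaches $r_0$ in the infinite-time limit.
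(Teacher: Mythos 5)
Your proposal takes a genuinely different route from the paper. Both approaches reduce the problem via radial symmetry to the first-order Hamilton--Jacobi equation $\phi_t=\frac{n-1}{r}\phi_r+c(r)|\phi_r|$ with the Neumann condition $\phi_r(R,t)=0$, but from there the paths diverge. The paper invokes the control-theoretic representation formula for concave Hamiltonians under Neumann boundary conditions (Ishii's weak-KAM results via the Skorokhod problem), writing $\phi(r,t)=\sup\{u_0(\gamma(0)):(\gamma,v,l)\in\mathrm{SP}(r,t)\}$, and then studies the admissible reversed curves $\eta(s)=\gamma(t-s)$ satisfying $-c(\eta)+\frac{n-1}{\eta}\le\dot\eta\le c(\eta)+\frac{n-1}{\eta}$. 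The extremal curve $\eta_1$ with $\dot\eta_1=-c(\eta_1)+\frac{n-1}{\eta_1}$ bounds every admissible curve from below (Lemma 5.1) and converges to $d(r_0)$ (Lemma 5.2), which gives the upper bound on $\phi_\infty$; explicit admissible curves reaching the maximizer give the lower bound. Your approach instead tracks the super-level sets $\{\phi(\cdot,t)\ge s\}$ as evolving unions of intervals, with (correct) boundary ODEs $\dot b=c(b)-\frac{n-1}{b}$ for the outer boundary and $\dot a=-\frac{n-1}{a}-c(a)$ for the inner one, and seeks the same two inequalities via geometric barriers. These ODEs are exactly the time-reversal of the paper's velocity constraint, so the dynamical content is identical.

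The real advantage of the paper's route is that the representation formula sidesteps everything you yourself flag as ``the hard part'': fattening of level sets, topological transitions when inner boundaries annihilate at the origin or components merge, and the interaction with the Neumann reservoir at $r=R$. In the control formulation these are invisible, since one tracks one-dimensional curves rather than set boundaries, and the supremum over admissible curves automatically encodes the selection. Your approach could in principle be completed using the set-theoretic viscosity-solution framework (Giga's book, which the paper does use in Section 6 for the unbounded domain example), but you would need to justify that the super-level sets are genuinely finite unions of intervals for a.e.\ level $s$, rule out fattening, build actual sub/supersolution barriers to rigorize the boundary ODEs (the formal identity $\dot b=-\phi_t/\phi_r$ is not available where $\phi$ is merely a viscosity solution), and verify that the Neumann condition is correctly reflected in the set-theoretic evolution at $R$. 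One minor structural remark: your preliminary step of checking that $\psi(r_0)=\max\{u_0(r):r\ge d(r_0)\}$ is a stationary viscosity solution is neither necessary nor sufficient -- the stationary problem has infinitely many solutions, as Example 5.3 of the paper illustrates -- and the comparison principle for the time-dependent problem cannot select among them on its own. The actual content of your argument is the pair of moving-barrier inequalities, which is the same logical skeleton as the paper's two-sided curve estimate, just expressed in set rather than curve language.
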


As a by-product, Theorem \ref{thm:radlimit}  shows that the solution to \eqref{eq:levelset1}--\eqref{eq:levelset3} is not globally Lipschitz continuous with an appropriate choice of initial data $u_0$. 

\begin{corollary}\label{thm:non-Lip-rad}
Consider the setting in Theorem  {\rm\ref{thm:radlimit}}.
Assume that there exist $0<a<b<R$ such that $a,b \in \cA$ and $(a, b) \subset \cA_-$.
Assume further that $u_0$ is a $C^2$ function on $[0,R]$ such that
\[
u_0(r)=
\begin{cases}
1 \quad &\text{ for } r\leq a,\\
\in (0,1) \quad &\text{ for } a < r \leq b,\\
0 \quad &\text{ for } b < r \leq R.
\end{cases}
\]
Then, $u$ is not globally Lipschitz, and
\[
\phi_\infty(r)=
\begin{cases}
1 \quad &\text{ for } r\leq a,\\
0 \quad &\text{ for } a< r \leq R.
\end{cases}
\]
\end{corollary}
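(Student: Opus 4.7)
The plan is to apply Theorem~\ref{thm:radlimit} directly to compute $\phi_\infty$ explicitly from the profile of $u_0$, to observe that it has a jump discontinuity at $r=a$, and then to deduce the failure of global Lipschitz regularity from this discontinuity.

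First I would handle $r_0 \in (0, a]$. A short case analysis on whether $r_0$ lies in $\mathcal{A}$, $\mathcal{A}_+$, or $\mathcal{A}_-$, combined with the hypothesis $a \in \mathcal{A}$, shows that $d(r_0) \leq a$ in every case. Indeed, for $r_0 \in \mathcal{A} \cup \mathcal{A}_+$ we have $d(r_0) \leq r_0 \leq a$ immediately, and for $r_0 \in \mathcal{A}_-$ with $r_0 < a$ we use that $a \in \mathcal{A} \cap (r_0, R]$ to conclude $d(r_0) = \min(\mathcal{A} \cap (r_0, R]) \leq a$. Since $u_0 \equiv 1$ on $[0, a]$ and $u_0 \leq 1$ everywhere, the formula \eqref{radlimit} yields $\phi_\infty(r_0) = 1$.

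Next I would handle $r_0 \in (a, R]$. The assumption $(a, b) \subset \mathcal{A}_-$ together with $b \in \mathcal{A}$ gives $\mathcal{A} \cap (r_0, R]$ has minimum $b$ for every $r_0 \in (a, b]$, so $d(r_0) = b$. For $r_0 > b$, the analogous case analysis produces $d(r_0) \geq b$: if $r_0 \in \mathcal{A}$ then $d(r_0) = r_0 > b$; if $r_0 \in \mathcal{A}_+$ then $b \in \mathcal{A} \cap (0, r_0)$ forces $d(r_0) \geq b$; and if $r_0 \in \mathcal{A}_-$ then $d(r_0) > r_0 > b$ (or $d(r_0) = R$). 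In every case $\{r \geq d(r_0)\} \subset [b, R]$, on which $u_0 \equiv 0$ (consistently with $u_0 \in C^2$), so \eqref{radlimit} gives $\phi_\infty(r_0) = 0$.

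Combining, $\phi_\infty$ jumps from $1$ to $0$ across $r = a$. For the non-Lipschitz conclusion, I would argue by contradiction: were $u$ globally Lipschitz in $x$ with some constant $L$ independent of $t$, then by radial symmetry the map $r \mapsto \phi(r, t)$ would be Lipschitz in $r$ with constant $L$ uniformly in $t \geq 0$. Passing to the pointwise limit $t \to \infty$ via Theorem~\ref{thm:radlimit}, the limit $\phi_\infty$ would inherit the Lipschitz bound and in particular be continuous, contradicting the jump just computed. The argument is a direct consequence of Theorem~\ref{thm:radlimit}; the only mildly delicate step is the case analysis for $d(r_0)$, which must be verified separately in each subinterval of $(0, R]$.
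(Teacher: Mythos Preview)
Your proposal is correct and follows exactly the paper's approach: compute $\phi_\infty$ directly from Theorem~\ref{thm:radlimit} and then argue that a global Lipschitz bound on $u$ would pass to the pointwise limit, contradicting the jump. The paper's own proof is two sentences and simply asserts both steps; you have supplied the explicit case analysis for $d(r_0)$ that the paper omits. One small slip: for $r_0=b$ you should not invoke $\min(\mathcal{A}\cap(r_0,R])$ since $b\notin(b,R]$, but rather use $b\in\mathcal{A}$ so $d(b)=b$ directly; the conclusion is unaffected.
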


Lastly, we give another example to show the non global Lipschitz phenomenon in Theorem \ref{thm:conv}.
Since we deal with the situation where $\Omega$ is unbounded there, we leave the precise statement of Theorem \ref{thm:conv} and corresponding adjustments to Section \ref{sec:blow-up2}.

\medskip

Our problem \eqref{eq:levelset1}--\eqref{eq:levelset2} basically describes a  level-set forced mean curvature flow with the homogeneous Neumann boundary condition.
If a level set of the unknown $u$ is a smooth enough surface, then it evolves with the normal velocity $V(x)=\kappa + c(x)$, where $\kappa$ equals $(n-1)$ times the mean curvature of the surface at $x$, and it perpendicularly intersects $\partial \Omega$ (if ever).
What is really interesting and delicate here is the competition between the forcing term $c(x)$ and the mean curvature term $\kappa$ coupled with the constraint on perpendicular intersections of the level sets with the boundary.
It is worth emphasizing that we do not assume $\Omega$ is convex, and the geometry of $\partial \Omega$ plays a crucial role in the behavior of the solution here. Indeed, analyzing the competition between the two constraints, the force and the boundary condition subjected to $\partial\Omega$, as time evolves in viscosity sense is the main topic of this paper.

We now briefly describe our approaches to get the aforementioned results.
 We use the maximum principle and rely on the classical Bernstein method to establish \emph{a priori} gradient estimates for the solution. 
 The main difficulty is when a maximizer is located on the boundary, which we cannot apply the maximum principle directly. We deal with this difficulty by considering a multiplier that puts the maximizer, with the homogeneous Neumann boundary condition, inside the domain so that the maximum principle is applicable. 
 To the best of our knowledge, the idea of handling a maximizer in the proof of Theorem \ref{thm:global-grad} for the level-set equation for forced mean curvature flows under the Neumann boundary condition is new in the literature.

Once we get a global Lipschitz estimate for the solution, by using a standard Lyapunov function, we prove the convergence in Theorem \ref{thm:asym}. Next, the radially symmetric setting is considered, and \eqref{eq:levelset1}--\eqref{eq:levelset3} are reduced to a first-order singular Hamilton-Jacobi equation with the homogeneous Neumann boundary condition; see \cite{GMT, GMT2} for a related problem on the whole space. 
By using the representation formula for the Neumann problem (see, e.g., \cite{I2}), we are able to obtain Theorem \ref{thm:radlimit} and Corollary \ref{thm:non-Lip-rad}. 
The situation considered in Theorem \ref{thm:conv} is related to that in \cite[Section 4]{Ohtsuka} with no forcing term. 
As we have a constant forcing $c$ interacting with the boundary, the construction in the proof of Theorem \ref{thm:conv} is rather delicate and involved. 
It is worth emphasizing that Corollary \ref{thm:non-Lip-rad} and Theorem \ref{thm:conv} demonstrate that  condition  \eqref{condition:c}, which is needed for the global Lipschitz regularity of $u$, is essentially optimal.

\medskip
We conclude this introduction by giving a non exhaustive list of  related works to our paper.  
There are several asymptotic analysis results on the forced mean curvature flows with Neumann boundary conditions \cite{Guan, MNL, MT, Xu} or with periodic boundary conditions \cite{CN}, but they are all for graph-like surfaces. 
The volume preserving mean curvature flow, which is a different type of forced mean curvature flows, was studied in 
\cite{KKb, KKP}. 
Recently, the relation between the level set approach and the varifold approach for \eqref{eq:levelset1} with $c\equiv0$ was investigated in \cite{Aimi}. 
We also refer to \cite{GMT2, Hamamuki-Misu} for some recent results on the asymptotic growth speed of solutions to forced mean curvature flows with discontinuous source terms in the whole space. 

\medskip
\subsection*{Organization of the paper}
The paper is organized as follows. 
In Section  \ref{sec:prelim}, we give the notion of viscosity solutions to the problem and some basic results.
In Section \ref{sec:Lip}, we prove the local and global gradient estimates.
Section \ref{sec:largetime} is devoted to the study on large time behavior of the solution and its large time profile.
We give two examples that the spatial gradient of the solution grows to infinity as time tends to infinity in Sections \ref{sec:blow-up} and \ref{sec:blow-up2} if we do not impose assumption \eqref{condition:c}  on the force $c$.


\section{Preliminaries}\label{sec:prelim}
In this section, we recall the notion of viscosity solutions to the Neumann boundary problem \eqref{eq:levelset1}--\eqref{eq:levelset3} and give some related results.

\medskip

Let $\mathcal{S}^n$ be the set of symmetric matrices of size  $n$. 
 Define $F:\overline{\Omega}\times(\mathbb{R}^n \setminus \{0\})\times \mathcal{S}^n \rightarrow \mathbb{R}$ by
$$
F(x,p,X)=\text{trace}\left(\left(I-\frac{p\otimes p}{|p|^2}\right)X\right)+c(x)|p|.
$$
We denote the semicontinuous envelopes of $F$ by, for $(x,p,X)\in\overline{\Omega}\times \mathbb{R}^n \times\mathcal{S}^n$,
$$
F_*(x,p,X)=\liminf_{(y,q,Y)\rightarrow(x,p,X)}F(y,q,Y),
\quad 
F^*(x,p,X)=\limsup_{(y,q,Y)\rightarrow(x,p,X)}F(y,q,Y).
$$

\begin{definition} An upper semicontinuous function $u:\overline{\Omega}\times [0,\infty)\rightarrow \mathbb{R}$ is said to be a viscosity subsolution of \eqref{eq:levelset1}--\eqref{eq:levelset3}   
if $u(\cdot,0)\leq u_0$ on $\overline{\Omega}$, and, 
for any $\varphi\in C^2(\overline{\Omega}\times[0,\infty))$, 
if $(\hat{x},\hat{t})\in \overline{\Omega}\times(0,\infty)$ is a maximizer of $u-\varphi$, and if $\hat{x}\in\Omega$, then 
$$
\varphi_t(\hat{x},\hat{t})-F^*(\hat{x},D\varphi(\hat{x},\hat{t}),D^2\varphi(\hat{x},\hat{t}))\leq 0;
$$
if $\hat{x}\in\partial\Omega$, then 
$$
\min\left\{\varphi_t(\hat{x},\hat{t})-F^*(\hat{x},D\varphi(\hat{x},\hat{t}),D^2\varphi(\hat{x},\hat{t})),\frac{\partial\varphi}{\partial\Vec{\mathbf{n}}}(\hat{x},\hat{t})\right\}\leq 0.
$$
Similarly, a lower semicontinuous function $u:\overline{\Omega}\times [0,\infty)\rightarrow \mathbb{R}$ is said to be a viscosity supersolution of \eqref{eq:levelset1}--\eqref{eq:levelset3}  if $u(\cdot,0)\geq u_0$ on $\overline{\Omega}$, and, 
for any $\varphi\in C^2(\overline{\Omega}\times[0,\infty))$, 
if $(\hat{x},\hat{t})\in \overline{\Omega}\times(0,\infty)$ is a minimizer of $u-\varphi$, and if $\hat{x}\in\Omega$, then 
$$
\varphi_t(\hat{x},\hat{t})-F_*(\hat{x},D\varphi(\hat{x},\hat{t}),D^2\varphi(\hat{x},\hat{t}))\geq 0;
$$
if $\hat{x}\in\partial\Omega$, then 
$$
\max\left\{\varphi_t(\hat{x},\hat{t})-F_*(\hat{x},D\varphi(\hat{x},\hat{t}),D^2\varphi(\hat{x},\hat{t})),\frac{\partial\varphi}{\partial\Vec{\mathbf{n}}}(\hat{x},\hat{t})\right\}\geq 0.
$$

\noindent Finally, a continuous function $u$ is said to be a viscosity solution of \eqref{eq:levelset1}--\eqref{eq:levelset3} if $u$ is both its viscosity subsolution and its viscosity supersolution.
\end{definition}

Henceforth, since we are always concerned with viscosity solutions, the adjective ``viscosity" is omitted.
The following comparison principle for solutions to  \eqref{eq:levelset1}--\eqref{eq:levelset3} in a bounded domain is well known (see, e.g., \cite{GS}).

\begin{proposition}[Comparison principle for  \eqref{eq:levelset1}--\eqref{eq:levelset3}]\label{prop:comp}
Let $u$ and $v$ be a subsolution and a supersolution of  \eqref{eq:levelset1}--\eqref{eq:levelset3}, respectively. 
Then, $u\leq v$ in $\overline{\Omega}\times[0,\infty)$.
\end{proposition}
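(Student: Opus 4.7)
The plan is to apply the standard doubling-of-variables argument, adapted to our singular operator $F$ and to the oblique boundary condition, following \cite{GS, CGG, ES}. Suppose for contradiction that $M := \max_{\overline{\Omega}\times [0,T]}(u-v) > 0$ for some $T > 0$. Replace $u$ by $u - \eta/(T-t)$ for small $\eta > 0$ so that $u$ becomes a strict subsolution in the bulk, with a positive maximizer $(\hat x,\hat t)$ satisfying $\hat t \in (0,T)$. For $\alpha \to \infty$, introduce
\[
\Phi_\alpha(x,y,t,s) = u(x,t) - v(y,s) - \tfrac{\alpha}{4}|x-y|^4 - \tfrac{\alpha}{2}(t-s)^2 - \psi_\alpha(x,y),
\]
where $\psi_\alpha$ is a boundary-adapted correction discussed below. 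The quartic spatial penalty, rather than the classical quadratic one, is essential because $F$ is singular at $p=0$: the induced test gradient $p_\alpha = \alpha|x_\alpha - y_\alpha|^2(x_\alpha - y_\alpha)$ vanishes faster than $|x_\alpha - y_\alpha|$, which enables the use of the envelopes $F^*,F_*$ to pass to the limit when $p_\alpha = 0$, exactly as in \cite{CGG, ES}. Let $(x_\alpha, y_\alpha, t_\alpha, s_\alpha)$ denote a maximizer of $\Phi_\alpha$, and note that by standard upper-semicontinuity arguments $(x_\alpha, y_\alpha, t_\alpha, s_\alpha)\to(\hat x,\hat x,\hat t,\hat t)$ and $\alpha|x_\alpha-y_\alpha|^4 + \alpha(t_\alpha-s_\alpha)^2 \to 0$.

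The hard part will be the Neumann condition: if $x_\alpha$ or $y_\alpha$ lies on $\partial\Omega$, the definition of sub/supersolution only yields a disjunctive inequality, and the boundary alternative must be excluded to invoke the PDE inequality. This is the role of $\psi_\alpha$. Using $\partial\Omega\in C^{2,\theta}$, the signed distance $d_\Omega$ to $\partial\Omega$ is $C^2$ in a tubular neighborhood with $Dd_\Omega = -\vec{\mathbf{n}}$ on $\partial\Omega$, so a choice of the form $\psi_\alpha(x,y) = -\beta_\alpha\bigl(d_\Omega(x) + d_\Omega(y)\bigr)$ with $\beta_\alpha$ chosen to dominate the bulk contribution $\alpha|x_\alpha - y_\alpha|^3$ on the boundary gives
\[
D_x\bigl(\tfrac{\alpha}{4}|x_\alpha-y_\alpha|^4 + \psi_\alpha(x_\alpha,y_\alpha)\bigr)\cdot\vec{\mathbf{n}}(x_\alpha) > 0,
\]
which excludes the subsolution's Neumann alternative $\partial\varphi/\partial\vec{\mathbf{n}} \leq 0$; the symmetric computation at $y_\alpha$ excludes the supersolution's Neumann alternative. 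This is the classical Lions--Ishii trick for oblique boundary problems, whose realization for the present singular operator is carried out in \cite{GS}.

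With both inequalities reduced to the PDE branch, I would apply the parabolic theorem on sums (Crandall--Ishii) to produce symmetric matrices $(X_\alpha, Y_\alpha)$ with $X_\alpha \leq Y_\alpha$ up to an $O(\alpha^{-1})$ error, subtract the sub- and supersolution inequalities evaluated at $p_\alpha$, and estimate the difference using degenerate ellipticity of the geometric part (the projection $I - p\otimes p/|p|^2$ is nonnegative and the trace pairing preserves $X_\alpha\leq Y_\alpha$) together with $c\in C^1(\overline{\Omega})$, which yields a forcing contribution bounded by $\|Dc\|_\infty |x_\alpha-y_\alpha|\cdot|p_\alpha| = o(1)$. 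The strict gain $\eta/(T-t_\alpha)^2$ on the subsolution side then contradicts the resulting inequality after sending $\alpha\to\infty$, $\beta_\alpha$ appropriately, and finally $\eta\to 0$. The degenerate case $p_\alpha = 0$ (i.e., $x_\alpha = y_\alpha$) is handled via $F^*,F_*$ as in \cite{CGG, ES}, using that the quartic penalty makes these envelopes agree to leading order at $p=0$. Since $T>0$ is arbitrary, this gives $u\leq v$ on $\overline{\Omega}\times[0,\infty)$.
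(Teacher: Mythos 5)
Your proposal reconstructs the standard doubling-of-variables argument for singular degenerate parabolic equations with Neumann boundary conditions, which is indeed the approach the paper invokes by citing \cite{GS} (the paper gives no proof of its own). The quartic penalty, the signed-distance boundary correction, and the parabolic theorem on sums are all the right ingredients. However, there is a genuine gap in how you calibrate the parameter $\beta_\alpha$. You write that $\beta_\alpha$ should "dominate the bulk contribution $\alpha|x_\alpha-y_\alpha|^3$ on the boundary." But the doubling lemma only yields $\alpha|x_\alpha-y_\alpha|^4\to 0$, which is perfectly consistent with $\alpha|x_\alpha-y_\alpha|^3\to\infty$ (for instance $|x_\alpha-y_\alpha|\sim\alpha^{-5/16}$). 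If $\beta_\alpha\to\infty$, then $D^2_{x,y}\psi_\alpha$ has norm of order $\beta_\alpha\|D^2 d_\Omega\|_\infty$, and after you apply the Crandall--Ishii lemma, the trace of the degenerate elliptic matrix against this correction contributes an error of order $\beta_\alpha$ that the strict gain $\eta/(T-t_\alpha)^2$ cannot absorb. The argument, as written, does not close.

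The fix is a standard geometric observation that you have not made explicit: the $C^{2,\theta}$ regularity of $\partial\Omega$ gives a uniform exterior ball condition. Hence, for $x_\alpha\in\partial\Omega$ and $y_\alpha\in\overline\Omega$, one has $(x_\alpha-y_\alpha)\cdot\vec{\mathbf{n}}(x_\alpha)\geq -\tfrac{1}{2r_0}|x_\alpha-y_\alpha|^2$ for some $r_0>0$ depending only on $\partial\Omega$. Consequently, the \emph{normal} component of $p_\alpha$, which is the only piece the Neumann alternative sees, satisfies
\[
p_\alpha\cdot\vec{\mathbf{n}}(x_\alpha)\;\geq\; -\tfrac{\alpha}{2r_0}|x_\alpha-y_\alpha|^4 \;=\; o(1),
\]
not merely $\geq -\alpha|x_\alpha-y_\alpha|^3$. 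The symmetric bound holds at $y_\alpha\in\partial\Omega$. Therefore $\beta_\alpha$ can and should be taken \emph{small}, for instance $\beta_\alpha = \tfrac{\alpha}{r_0}|x_\alpha-y_\alpha|^4 + \alpha^{-1}\to 0$, so that both Neumann alternatives are excluded while the Hessian of $\psi_\alpha$ contributes only an $o(1)$ perturbation to the Crandall--Ishii inequality. With this correction your sketch becomes the argument of \cite{GS}.
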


To obtain Lipschitz estimates, it is convenient to consider an approximate problem of  \eqref{eq:levelset1}--\eqref{eq:levelset3}  by considering, for $\varepsilon>0$, $T>0$,

\begin{equation}\label{eq:levelset-ep}
\begin{cases}
u^{\varepsilon}_t=\sqrt{\varepsilon^2+|D u^{\varepsilon}|^2}\,\text{div}\left(\frac{D u^{\varepsilon}}{\sqrt{\varepsilon^2+|D u^{\varepsilon}|^2}}\right)+c(x)\sqrt{\varepsilon^2+|D u^{\varepsilon}|^2} \quad &\text{ in } \Omega\times(0,T],\\
\displaystyle \frac{\partial u^{\varepsilon}}{\partial \Vec{\mathbf{n}}}=0 \quad &\text{ on } \partial\Omega\times[0,T],\\
u^{\varepsilon}(x,0)=u_0(x) \quad &\text{ on } \overline{\Omega}.
\end{cases}
\end{equation}
Equation \eqref{eq:levelset-ep} describes the motion of the graph of $\frac{u^\ep}{\ep}$ under the forced mean curvature flow $V=\kappa+c$ in $\Omega$ with right contact angle condition on $\partial \Omega$.
The following result on a priori estimates on the gradient of $u^\ep$ plays a crucial role in our analysis.

\begin{theorem}[A priori estimates]\label{thm:local-grad-ep}
Assume that $\partial \Omega$ is smooth and $c\in C^\infty(\ol \Omega)$.
For each $\ep \in (0,1)$  and  $T>0$, assume that $u^\ep \in C^\infty(\ol\Omega \times (0,T]) \cap C^1(\overline{\Omega}\times[0,T])$ is the unique solution of \eqref{eq:levelset-ep}.
Then, there exist a constant $M>0$ and a constant $C_T>0$ depending on $T$ such that 
\begin{equation}\label{ineq:apriori}
\lVert u^{\varepsilon}_t\rVert_{L^{\infty}(\overline{\Omega}\times[0,T])}\leq M \quad \text{ and } \quad
\lVert Du^{\varepsilon}\rVert_{L^{\infty}(\overline{\Omega}\times[0,T])}\leq C_T.
\end{equation}
Here, $M$ and $C_T$ are independent of $\ep \in (0,1)$.
\end{theorem}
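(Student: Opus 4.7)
The proof splits cleanly into a time-derivative bound and a spatial gradient bound, which I would handle independently. For the bound on $u^\ep_t$, my plan is to exploit that both \eqref{eq:levelset-ep} and its Neumann boundary condition are autonomous in $t$. For each $h>0$, the time shift $(x,t)\mapsto u^\ep(x,t+h)$ is a classical solution of the same problem with initial data $u^\ep(\cdot,h)$; since \eqref{eq:levelset-ep} is uniformly parabolic for each fixed $\ep>0$, the comparison principle gives
\[
\|u^\ep(\cdot,t+h)-u^\ep(\cdot,t)\|_{L^\infty(\overline\Omega)}\leq \|u^\ep(\cdot,h)-u_0\|_{L^\infty(\overline\Omega)},
\]
and dividing by $h$ as $h\downarrow 0$ yields $\|u^\ep_t(\cdot,t)\|_\infty\leq \|u^\ep_t(\cdot,0)\|_\infty$. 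The right-hand side is bounded by plugging $u_0\in C^{2,\theta}(\overline\Omega)$ into the RHS of \eqref{eq:levelset-ep} at $t=0$, using $c\in C^\infty(\overline\Omega)$; this produces a constant $M$ uniform in both $\ep\in(0,1)$ and $T$.

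For the bound on $Du^\ep$, I would use the classical Bernstein method paired with a multiplier that forces any spatial maximum into the interior, along the lines sketched in the introduction. Using the $C^{2,\theta}$-regularity of $\partial\Omega$, fix once and for all a function $\eta\in C^{2,\theta}(\overline\Omega)$ with $\partial\eta/\partial \vec{\mathbf{n}}=-1$ on $\partial\Omega$, and set
\[
\phi(x,t) := e^{\lambda \eta(x)}\bigl(\ep^2+|Du^\ep(x,t)|^2\bigr)
\]
for a large parameter $\lambda>0$ to be chosen. On $\partial\Omega$, the Neumann condition makes $Du^\ep$ tangent to $\partial\Omega$; differentiating the identity $Du^\ep\cdot \vec{\mathbf{n}}=0$ tangentially produces $\partial|Du^\ep|^2/\partial \vec{\mathbf{n}}=2(SDu^\ep)\cdot Du^\ep$, where $S$ is the shape operator of $\partial\Omega$, and hence
\[
\frac{\partial\phi}{\partial \vec{\mathbf{n}}} = e^{\lambda\eta}\bigl(-\lambda(\ep^2+|Du^\ep|^2)+2(SDu^\ep)\cdot Du^\ep\bigr)\quad\text{on }\partial\Omega\times[0,T].
\]
Choosing $\lambda>2\|S\|_{L^\infty(\partial\Omega)}$ makes this expression strictly negative, contradicting $\partial\phi/\partial \vec{\mathbf{n}}\geq 0$ at any spatial maximum of $\phi$ on $\partial\Omega$, and therefore forcing the maximum of $\phi$ over $\overline\Omega\times[0,T]$ to be attained either at $t=0$ (where it is controlled by $u_0$) or at an interior space-time point.

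At an interior maximum $(x_0,t_0)$, one has $\phi_t\geq 0$, $D\phi=0$ and $D^2\phi\leq 0$; I would then differentiate \eqref{eq:levelset-ep} in $x_k$, multiply by $u^\ep_k$, sum, and use the optimality conditions on $\phi$. The cross terms coming from the $p$-dependence of $a^{ij}(p)=\delta_{ij}-p_ip_j/(\ep^2+|p|^2)$ and from $Dc$ can be absorbed into the good term $-a^{ij}(Du^\ep)u^\ep_{ki}u^\ep_{kj}$ via Cauchy--Schwarz, leading to a differential inequality of the form $\phi_t(x_0,t_0)\leq C_1\phi(x_0,t_0)+C_2$ with $C_1,C_2$ depending on $\|c\|_{C^1}$, $\eta$ and $\lambda$ but not on $\ep$. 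A Gr\"onwall step then yields $\max_{\overline\Omega\times[0,T]}\phi\leq Ce^{C_1T}$, which is the advertised bound by $C_T$. The main obstacle, and precisely the reason the multiplier is introduced, is the boundary analysis: without the weight $e^{\lambda\eta}$, the sign of $\partial|Du^\ep|^2/\partial \vec{\mathbf{n}}$ on negatively curved portions of $\partial\Omega$ is unfavorable and the maximum could, a priori, sit there, so the naive Bernstein argument fails. The exponential weight trades the homogeneous Neumann condition on $u^\ep$ for a strict inward push on $\phi$; this is the new ingredient highlighted in the introduction and the delicate part of the proof.
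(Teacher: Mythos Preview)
Your overall plan is sound and differs from the paper's in two interesting respects. For $\|u^\ep_t\|_\infty$, you use the comparison principle on time shifts; the paper (Lemma~\ref{lem:time-grad}) instead differentiates the equation in $t$ and applies the maximum principle to $u^\ep_t-\delta t$. Both give $\|u^\ep_t(\cdot,t)\|_\infty\leq\|u^\ep_t(\cdot,0)\|_\infty$, and your route is arguably the more elementary one. For the spatial bound, your \emph{global} multiplier $e^{\lambda\eta}$ with $\partial\eta/\partial\vec{\mathbf{n}}=-1$ is the classical Neumann device and pushes every spatial maximum of $\phi$ into $\Omega$ in one stroke. The paper takes a \emph{local} route: with $w=\sqrt{\ep^2+|Du^\ep|^2}$ it first tries $\rho=e^{-Mt}$; if the maximum of $\rho w$ lands on $\partial\Omega$ at some $x_0$, it multiplies further by a quadratic centered on an inscribed ball tangent at $x_0$. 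That choice makes the principal-curvature bound $C_0$ and inscribed radius $K_0$ appear explicitly, which is precisely what later produces the sharp structural condition~\eqref{condition:c} in Theorem~\ref{thm:global-grad}; your $e^{\lambda\eta}$ would also work there but would yield a condition phrased through $\eta$ and $\lambda$ rather than the geometric quantities $C_0,K_0$.

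There is, however, a genuine slip at the very end. Having located a space-time maximum $(x_0,t_0)$ of $\phi$ with $t_0>0$, you derive $\phi_t(x_0,t_0)\leq C_1\phi(x_0,t_0)+C_2$ and then invoke ``Gr\"onwall''. But at such a point $\phi_t(x_0,t_0)\geq 0$, so the inequality only gives $\phi(x_0,t_0)\geq -C_2/C_1$, which is no upper bound; Gr\"onwall requires a differential inequality along a time interval, not at a single point. The fix is routine and there are two standard options: either build the time decay into the multiplier, replacing $\phi$ by $e^{-\mu t}\phi$ with $\mu>C_1$ so that the space-time maximum is forced to $t=0$ (this is exactly the role of the paper's $e^{-Mt}$), or drop the space-time setup and track $\Phi(t):=\max_{\overline\Omega}\phi(\cdot,t)$, whose argmax is interior by your $\eta$-trick; then the same Bernstein computation gives $\Phi'(t)\leq C_1\Phi(t)+C_2$ a.e., and Gr\"onwall legitimately applies.
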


The proof of Theorem \ref{thm:local-grad-ep} is given in the next section.
The a priori estimates then allow us to get the existence and uniqueness of solutions to \eqref{eq:levelset-ep}.

\begin{proposition}\label{prop:exuniquegra}
For each $\ep \in (0,1)$  and  $T>0$, equation \eqref{eq:levelset-ep} has a unique continuous solution $u^{\varepsilon}$.
Furthermore, $u^{\varepsilon}\in  C^{2,1}(\Omega\times(0,T]) \cap C^1(\overline{\Omega}\times[0,T])$ and \eqref{ineq:apriori} holds.
\end{proposition}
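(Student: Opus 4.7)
The plan is to combine the a priori estimate of Theorem \ref{thm:local-grad-ep} with classical existence theory for quasilinear parabolic equations with oblique boundary conditions. For fixed $\varepsilon\in(0,1)$, equation \eqref{eq:levelset-ep} has the quasilinear form
$$u^\varepsilon_t = \tr\!\left(a_\varepsilon(Du^\varepsilon)D^2u^\varepsilon\right) + c(x)\sqrt{\varepsilon^2+|Du^\varepsilon|^2},\qquad a_\varepsilon(p)=I-\frac{p\otimes p}{\varepsilon^2+|p|^2},$$
with smooth coefficients. The eigenvalues of $a_\varepsilon(p)$ lie in $[\varepsilon^2/(\varepsilon^2+|p|^2),\,1]$, so the equation is parabolic but becomes degenerate as $|p|\to\infty$; before applying off-the-shelf uniformly parabolic theory one must first enforce a gradient bound.

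First, I would truncate. Let $\chi\in C^\infty(\R^n;\R^n)$ agree with the identity on $\overline{B(0,2C_T)}$ and be bounded on $\R^n$, where $C_T$ is the constant from Theorem \ref{thm:local-grad-ep}, and replace $Du^\varepsilon$ by $\chi(Du^\varepsilon)$ in the coefficients of \eqref{eq:levelset-ep}. The truncated equation is uniformly parabolic with smooth coefficients and a smooth linear oblique boundary condition. The classical Schauder theory for quasilinear parabolic oblique derivative problems (as developed by Ladyzhenskaya--Solonnikov--Uraltseva, and by Lieberman) — applied with the compatibility condition $\partial u_0/\partial\vec{\mathbf n}=0$ and the smoothness of $c$, $\partial\Omega$, $u_0$ — yields a unique classical solution $\tilde u^\varepsilon$ on $\overline\Omega\times[0,T]$, and parabolic bootstrap upgrades the interior regularity to $C^\infty(\Omega\times(0,T])\cap C^1(\overline\Omega\times[0,T])$.

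Next, I would close the loop. The proof of Theorem \ref{thm:local-grad-ep} proceeds via pointwise Bernstein-type computations whose conclusions depend only on the local structure of the equation at the actual values of $Du^\varepsilon$. Consequently the identical argument applies verbatim to the truncated problem \emph{provided} $|D\tilde u^\varepsilon|$ never enters the region where $\chi$ differs from the identity, and the constant it produces is independent of the truncation. Since $\|Du_0\|_{L^\infty}\leq C_T$ at $t=0$ and $t\mapsto\|D\tilde u^\varepsilon(\cdot,t)\|_{L^\infty}$ is continuous on $[0,T]$, a standard continuity-in-time bootstrap shows $|D\tilde u^\varepsilon|\leq C_T$ throughout $\overline\Omega\times[0,T]$. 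Hence the truncation is never active, $\tilde u^\varepsilon$ solves \eqref{eq:levelset-ep}, and \eqref{ineq:apriori} holds. Uniqueness is immediate from the classical parabolic comparison principle, or equivalently from Proposition \ref{prop:comp} applied in the viscosity sense, since classical sub/supersolutions are viscosity sub/supersolutions.

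The main obstacle is the apparent circularity: Theorem \ref{thm:local-grad-ep} presupposes the existence of a smooth solution, while the existence argument needs a gradient bound to control the degeneracy of $a_\varepsilon(p)$ as $|p|\to\infty$. The truncation plus continuity-in-time bootstrap breaks this circle cleanly, but it requires one to inspect the Bernstein-type computations behind Theorem \ref{thm:local-grad-ep} and verify that they see only the local structure of the equation along the actual gradient trajectory, so that the constant $C_T$ for the truncated problem coincides with the one stated there. Once this is verified, existence, uniqueness and the claimed regularity follow from standard quasilinear parabolic theory applied in the uniformly parabolic regime.
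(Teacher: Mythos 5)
Your proposal is correct and is essentially the standard route the paper has in mind (the paper omits the proof, noting it is classical and pointing to Mizuno--Takasao): truncate the gradient-dependent coefficients to make \eqref{eq:levelset-ep} uniformly parabolic, apply Schauder/Ladyzhenskaya--Solonnikov--Uraltseva theory for quasilinear oblique-derivative problems with the compatibility condition $\partial u_0/\partial\Vec{\mathbf{n}}=0$, and then use the a~priori bound from Theorem~\ref{thm:local-grad-ep} together with a continuity-in-time argument to show the truncation never activates, after which uniqueness follows from the comparison principle. One small remark: the $C^\infty$ interior regularity you claim holds only under the auxiliary smoothness assumptions of Theorem~\ref{thm:local-grad-ep}; for the $C^{2,\theta}$, $C^1$ data assumed in the rest of the paper one only gets the stated $C^{2,1}(\Omega\times(0,T])\cap C^1(\overline{\Omega}\times[0,T])$, and the passage from smooth data to the general case requires an approximation step which you do not mention but which is routine given that the bounds in \eqref{ineq:apriori} depend only on $\|u_0\|_{C^2}$, $\|c\|_{C^1}$ and the geometry.
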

Proposition \ref{prop:exuniquegra} can be obtained by the classical parabolic PDE theory. 
For instance, 
we refer to \cite{MT} for a similar form of Proposition \ref{prop:exuniquegra}. 
The proof of this proposition is quite standard, and hence, is omitted here. 

Once we get \eqref{ineq:apriori},
by the standard stability result of viscosity solutions, and the uniqueness of viscosity solutions to \eqref{eq:levelset1}--\eqref{eq:levelset3}, we imply that
\begin{equation*}
\label{eq:uni}
u^{\varepsilon}\rightarrow u\quad \text{as} \ \varepsilon\rightarrow 0 \quad
\text{uniformly on} \ \overline{\Omega}\times[0,T] 
\end{equation*}
for each $T>0$.
Moreover, Theorem \ref{thm:local-grad-ep} and Proposition \ref{prop:exuniquegra} give us right away Theorem \ref{thm:local-grad}.


\section{Lipschitz regularity} \label{sec:Lip}

In this section, we prove Theorems \ref{thm:local-grad}, \ref{thm:global-grad}, and \ref{thm:local-grad-ep}. 
As noted, it is actually enough to prove Theorems \ref{thm:global-grad} and \ref{thm:local-grad-ep}. 
First, we prove that the time derivative of $u^\ep$ is bounded.

\begin{lemma}\label{lem:time-grad}
Assume that $\partial \Omega$ is smooth and $c\in C^\infty(\ol \Omega)$.
Suppose that $u^\ep \in C^\infty(\ol \Omega \times (0,T]) \cap C^1(\overline{\Omega}\times[0,T])$ is the unique  solution of \eqref{eq:levelset-ep} for each $\varepsilon \in (0,1)$ and $T>0$. 
Then, there exists $M>0$ depending only on the forcing term $c$ and the initial data $u_0$ such that, for $\ep \in (0,1)$,
$$
\lVert u^{\varepsilon}_t\rVert_{L^{\infty}(\overline{\Omega}\times[0,T])}\leq\lVert u^{\varepsilon}_t(\cdot,0)\rVert_{L^{\infty}(\overline{\Omega})} \leq M.
$$
\end{lemma}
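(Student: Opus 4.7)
The statement has two inequalities, and I would handle them in reverse order. The second one, $\|u_t^\varepsilon(\cdot,0)\|_{L^\infty(\ol\Omega)}\leq M$, is a direct consequence of plugging $t=0$ into \eqref{eq:levelset-ep}:
\[
u_t^\varepsilon(x,0)=\sqrt{\varepsilon^2+|Du_0|^2}\,\div\!\left(\frac{Du_0}{\sqrt{\varepsilon^2+|Du_0|^2}}\right)+c(x)\sqrt{\varepsilon^2+|Du_0|^2}.
\]
Since $u_0\in C^{2,\theta}(\ol\Omega)$ and $c\in C^{\infty}(\ol\Omega)$, both $|Du_0|$, $|D^2u_0|$ and $\|c\|_\infty$ are finite, and expanding the divergence one sees that the right-hand side is bounded pointwise by a constant $M$ depending only on $\|u_0\|_{C^2}$ and $\|c\|_{C^0}$; importantly, the estimate is uniform in $\varepsilon\in(0,1)$ because $\sqrt{\varepsilon^2+|Du_0|^2}\leq 1+|Du_0|$ and the $\varepsilon$-regularized divergence is no worse than the formal one.

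For the first inequality, the plan is to exploit the time-autonomous character of \eqref{eq:levelset-ep} together with the comparison principle. Fix $h>0$ and set
\[
v(x,t):=u^\varepsilon(x,t+h),\qquad w(x,t):=u^\varepsilon(x,t).
\]
Because the equation, the boundary condition, and the outward normal $\vec{\mathbf{n}}$ do not depend on $t$, both $v$ and $w$ solve \eqref{eq:levelset-ep} on $\ol\Omega\times[0,T-h]$ with the respective initial data $u^\varepsilon(\cdot,h)$ and $u_0$. By the comparison principle (which holds for the quasilinear uniformly parabolic equation \eqref{eq:levelset-ep} under the homogeneous Neumann condition, since the regularization makes the operator non-singular),
\[
\sup_{\ol\Omega\times[0,T-h]}|v-w|\ \leq\ \sup_{\ol\Omega}|u^\varepsilon(\cdot,h)-u_0|.
\]
Dividing by $h$ and letting $h\to 0^+$, the smoothness $u^\varepsilon\in C^1(\ol\Omega\times[0,T])$ gives the pointwise limit $|u_t^\varepsilon(x,t)|$ on the left and $\|u_t^\varepsilon(\cdot,0)\|_{L^\infty(\ol\Omega)}$ on the right, yielding the desired monotone-in-time estimate.

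The main obstacle is the rigorous justification of applying the comparison principle to the quasilinear parabolic problem \eqref{eq:levelset-ep} with Neumann data. One can handle this either by invoking a standard parabolic maximum principle for smooth solutions (valid since $\partial\Omega$ is smooth, $c\in C^\infty$, and we assume $u^\varepsilon\in C^\infty(\ol\Omega\times(0,T])\cap C^1(\ol\Omega\times[0,T])$ in the hypothesis), or, equivalently, by differentiating the PDE formally in $t$: setting $\psi:=u_t^\varepsilon$, one gets a linear parabolic equation
\[
\psi_t=a_{ij}^\varepsilon(x,t)\,\partial_{ij}\psi+b_i^\varepsilon(x,t)\,\partial_i\psi
\]
whose coefficients $a_{ij}^\varepsilon,b_i^\varepsilon$ depend on $Du^\varepsilon,D^2u^\varepsilon$, together with the Neumann condition $\partial \psi/\partial\vec{\mathbf{n}}=0$ obtained by differentiating the boundary condition in $t$. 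The classical weak maximum principle for linear parabolic equations with oblique derivative boundary condition then gives $\|\psi\|_{L^\infty(\ol\Omega\times[0,T])}\leq\|\psi(\cdot,0)\|_{L^\infty(\ol\Omega)}$, which, combined with the initial bound established above, concludes the proof.
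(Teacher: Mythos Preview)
Your proposal is correct. You actually outline two routes: the time-shift/comparison argument (your primary one) and the ``differentiate in $t$ and apply the maximum principle'' argument (mentioned at the end). The paper takes the second route: it differentiates \eqref{eq:levelset-ep} in $t$, obtains a linear equation for $\psi=u^\varepsilon_t$ with homogeneous Neumann data $\partial\psi/\partial\vec{\mathbf{n}}=0$, and then runs a direct contradiction argument, perturbing by $-\delta t$ to force a strict interior-in-time maximizer $(x_0,t_0)$ and using $D\psi(x_0,t_0)=0$ (valid also when $x_0\in\partial\Omega$ thanks to the Neumann condition) to reach a contradiction. The explicit constant chosen is $M=n^2\|D^2u_0\|_{L^\infty(\ol\Omega)}+\|c\sqrt{1+|Du_0|^2}\|_{L^\infty(\ol\Omega)}$, matching your initial-time estimate.

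Your comparison/time-shift argument is a genuinely different and slightly slicker packaging: it avoids differentiating the PDE and reduces everything to the $L^\infty$-contraction of the solution semigroup for \eqref{eq:levelset-ep} (which, as you note, is standard for this uniformly parabolic Neumann problem with smooth solutions, using also that constants can be added to solutions). The paper's approach, by contrast, is more hands-on but self-contained---it does not invoke an external comparison statement and makes explicit how the Neumann condition is used at a boundary maximizer, which foreshadows the more delicate boundary analysis in the gradient estimates that follow. Either route is fine here.
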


\begin{proof}
Set $b(p)=I_n-p\otimes p/(\varepsilon^2+|p|^2)$. 
Then \eqref{eq:levelset-ep} is expressed as
\begin{equation}\label{eq:levelset-prime}
u^{\varepsilon}_t-b^{ij}(Du^{\varepsilon})u^{\varepsilon}_{ij}-c(x)\sqrt{\varepsilon^2+|Du^{\varepsilon}|^2}=0 \quad\text{ in } \Omega\times(0,T].
\end{equation}
Here, we use the Einstein summation convention, 
and we write 
$f_i=\frac{\partial f}{\partial x_i}$ and
$f_{ij}=\frac{\partial^2 f}{\partial x_i \partial x_j}$
for $i,j=1,\ldots,n$, where $f=f(x,t)$ is a given function.
We now show that
\begin{equation}\label{eq:t-0}
\lVert u^{\varepsilon}_t\rVert_{L^{\infty}(\overline{\Omega}\times[0,T])}\leq\lVert u^{\varepsilon}_t(\cdot,0)\rVert_{L^{\infty}(\overline{\Omega})}.
\end{equation}
To prove \eqref{eq:t-0}, it is enough to obtain the upper bound
\[
\max_{\overline{\Omega}\times[0,T]} u^{\varepsilon}_t = \max_{\overline{\Omega}} u^{\varepsilon}_t (\cdot,0)
\]
as the lower bound can be obtained analogously.

Differentiating \eqref{eq:levelset-prime} with respect to $t$ yields
$$
(u^{\varepsilon}_t)_t-b^{ij}(u^{\varepsilon}_t)_{ij}-(b^{ij})_tu^{\varepsilon}_{ij}-c(x)\frac{(u^{\varepsilon}_t)_lu^{\varepsilon}_l}{\sqrt{\varepsilon^2+|Du^{\varepsilon}|^2}}=0,
$$
where
$$
(b^{ij})_t=-\frac{(u^{\varepsilon}_t)_iu^{\varepsilon}_j}{\varepsilon^2+|Du^{\varepsilon}|^2}-\frac{u^{\varepsilon}_i(u^{\varepsilon}_t)_j}{\varepsilon^2+|Du^{\varepsilon}|^2}+\frac{2u^{\varepsilon}_iu^{\varepsilon}_ju^{\varepsilon}_l(u^{\varepsilon}_t)_l}{(\varepsilon^2+|Du^{\varepsilon}|^2)^2}.
$$

Suppose, on the contrary, that $u^{\varepsilon}_t(x,t)>\max_{\overline{\Omega}} u^\ep_t(\cdot,0)$ for some $(x,t)\in\overline{\Omega}\times(0,T]$. 
Then, there exist a small number $\delta>0$ and $(x_0,t_0)\in\overline{\Omega}\times(0,T]$ such that $(x_0,t_0)\in\argmax_{\overline{\Omega}\times(0,T]}(u^{\varepsilon}_t-\delta t)$.

At $(x_0,t_0)$, we have $Du^{\varepsilon}_t=0,$ and note that the boundary case $x_0\in\partial\Omega$ is included due to the homogeneous Neumann boundary condition. 
Thus,
\begin{equation}\label{max-t}
(u^{\varepsilon}_t)_t-b^{ij}(u^\ep_t)_{ij}=0,\quad\text{at}\ (x_0,t_0).
\end{equation}
On the other hand, $(u^\ep_t-\delta t)_t\geq0,$ $-b^{ij}(u^{\varepsilon}_t)_{ij}\geq0$ at $(x_0,t_0).$
 Note that the Neumann boundary condition is used for $D^2u^{\varepsilon}_t \leq0$ at $(x_0,t_0)$ as well. 
 Since $(u^\ep_t)_t\geq\delta>0$, we arrive at a contradiction in \eqref{max-t}.
 Thus, \eqref{eq:t-0} holds.
Choose
\[
M = n^2 \lVert D^2u_0\rVert_{L^\infty(\ol \Omega)} + \lVert c \sqrt{1+|Du_0|^2} \rVert_{L^\infty(\ol \Omega)} 
\]
to complete the proof.
\end{proof}

We are now ready to prove Theorems \ref{thm:global-grad} and \ref{thm:local-grad-ep} using the classical Bernstein method.
It is important emphasizing that the boundary behavior needs to be handled rather carefully. 
We first give a proof of Theorem \ref{thm:global-grad}.  

\begin{proof}[Proof of Theorem~\ref{thm:global-grad}] 
Assume first that $\partial \Omega$ is smooth and $c\in C^\infty(\ol \Omega)$.
For each $\ep \in (0,1)$  and  $T>0$, let $u^\ep \in C^\infty(\ol\Omega \times (0,T]) \cap C^1(\overline{\Omega}\times[0,T])$ be the unique solution of \eqref{eq:levelset-ep}.

Let $w^{\varepsilon}=\sqrt{\varepsilon^2+|Du^{\varepsilon}|^2}$. 
In view of Lemma \ref{lem:time-grad}, we only need to show that
\begin{equation}\label{eq:bound-w-ep}
\max_{\ol \Omega \times [0,T]} w^\ep \leq C
\end{equation}
for some positive constant $C$ depending only on $\|u_0\|_{C^2(\ol \Omega)}$, $\|c\|_{C^1(\ol \Omega)}$,  the constants $n$,  $C_0$, $K_0$, and $\delta$ from \eqref{condition:c}.
The crucial point here is $C$ does not depend on $T$ and $\ep$.
Fix $(x_0,t_0)\in\argmax_{\overline{\Omega}\times[0,T]}w^{\varepsilon}$. 
If $t_0=0$, then
\[
\max_{\ol \Omega \times [0,T]} w^\ep \leq w^\ep(x_0,0) \leq \|Du_0\|_{L^\infty(\ol \Omega)} + 1,
\]
and \eqref{eq:bound-w-ep} is valid.
We next consider the case $t_0>0$.

We write $u=u^{\varepsilon}$, $w=w^{\varepsilon}$ in this proof for brevity.
Differentiate \eqref{eq:levelset-prime} in $x_k$ and multiply the result by $u_k$ to get
$$
u_ku_{kt}-(D_pb^{ij}\cdot Du_k)u_ku_{ij}-b^{ij}u_{k}u_{kij}-u_k c_k w-c\frac{u_ku_{lk}u_l}{w}=0.
$$
Substituting $ww_t=u_ku_{kt},\ ww_k=u_lu_{kl}$ and $ww_{ij}=u_{kij}u_k+b^{kl}u_{ki}u_{lj},$ we get
\begin{equation}\label{eq:w}
ww_t-w(D_pb^{ij}\cdot Dw)u_{ij}-wb^{ij}w_{ij}+b^{ij}b^{kl}u_{ki}u_{lj}-wDu\cdot Dc-cDu\cdot Dw=0. 
\end{equation}
We divide the proof into two cases: $x_0\in\Omega$ and $x_0\in\partial\Omega$.

\medskip

\noindent {\bf Case 1: the interior case $x_0\in\Omega$.}
We follow the computations of \cite[Lemma 4.1]{GMOT}. 
At $(x_0,t_0)$, we have $w_t\geq0$, $Dw=0$, $D^2w\leq0$, and thus
$$
wDu\cdot Dc\geq b^{ij}b^{kl}u_{ki}u_{lj}.
$$
We then use the Cauchy-Schwarz inequality
$$
(\textrm{tr}\alpha\beta)^2\leq\textrm{tr}(\alpha^2)\textrm{tr}(\beta^2)
$$
for all $\alpha, \beta\in\mathcal{S}^n$, and put $\alpha=A^{\frac{1}{2}}BA^{\frac{1}{2}}$, $\beta=I_n$, 
where $A=(b^{ij})$, $B=(u_{kl})$, $I_n$ the $n$ by $n$ identity matrix to get $\textrm{tr}(AB)^2\geq(\textrm{tr}AB)^2/\textrm{tr}(I_n).$

Therefore, at $(x_0,t_0)$,
\begin{align*}
\left|Dc(x_0)\right|w^2\geq wDu\cdot Dc\geq b^{ij}b^{kl}u_{ki}u_{lj}=\textrm{tr}(AB)^2
\geq\frac{(\textrm{tr}AB)^2}{\textrm{tr}(I_n)}=\frac{1}{n}\left(u_t-c(x_0)w\right)^2
\end{align*}
Since $\frac{1}{n}c(x)^2-\left|Dc(x)\right|\geq\delta>0$ by \eqref{condition:c}, we imply that at $(x_0,t_0)$,
\[
\delta w^2 \leq \frac{2u_t c(x_0)}{n}  w \quad \Longrightarrow \quad w \leq \frac{2 M \|c\|_{L^\infty(\ol \Omega)}}{n\delta},
\]
which confirms \eqref{eq:bound-w-ep}.

\medskip

\noindent {\bf Case 2: the boundary case $x_0\in\partial\Omega$.}
As $\partial \Omega$ is $C^{2,\theta}$, we assume that $\mathbf{n}$ is defined as a $C^1$ function in a neighborhood of $\partial \Omega$.
Note that the Neumann boundary condition $Du\cdot \Vec{\mathbf{n}}=0$ gives $\left(D^2u\, \Vec{\mathbf{n}}+D\Vec{\mathbf{n}} Du\right)\cdot  v =0$ for all $ v \in \R^n$ perpendicular to $\Vec{\mathbf{n}}$  on $\partial \Omega \times [0,T]$.
Thus, on $\partial \Omega \times [0,T]$,
\[
\frac{\partial w}{\partial \Vec{\mathbf{n}}}=\frac{D^2u Du}{w}\cdot\Vec{\mathbf{n}}=-\frac{D\Vec{\mathbf{n}}Du\cdot Du}{w}\leq C_0\frac{|Du|^2}{w},
\]
where $C_0=\sup\{-\lambda\,:\,\lambda\ \textrm{is a principal curvature of $\partial\Omega$ at $x_0$ for $x_0 \in \partial \Omega$}\}$.

If $C_0<0$, then $\frac{\partial w}{\partial \Vec{\mathbf{n}}}<0$ on $\partial \Omega \times [0,T]$, and hence $w$ cannot attain its maximum on $\partial \Omega \times [0,T]$. 
Therefore, $C_0\geq0$. 
We consider the case when $C_0>0$ first, and deal with the case when $C_0=0$ later. We note that if $C_0>0$, then
\[
\frac{\partial w}{\partial \Vec{\mathbf{n}}}\leq C_0\frac{|Du|^2}{w}<C_0w.
\]

\medskip

Take $x_c\in\Omega$ so that $B:=B(x_c, K_0/2)$ is inside $\Omega$ and tangent to the boundary $\partial\Omega$ at $x_0$. 
Consider a multiplier 
\[
\rho(x)=-\frac{C_0}{K_0}|x-x_c|^2+\frac{C_0 K_0}{4}+1 \quad \text{ for } x \in \ol \Omega.
\]
Then, $\rho>1$ in $B$, $\rho=1$ on $\partial B$, and $\rho \leq 1$ on $\ol \Omega \setminus B$.
Besides, $C_0 \rho(x_0)+\frac{\partial \rho}{\partial \Vec{\mathbf{n}}}(x_0)=0$. 

Denote by $\psi=\rho w$.
Then, at $(x_0,t_0)$,
\begin{equation}\label{rho-1}
\frac{\partial \psi}{\partial \Vec{\mathbf{n}}}=\frac{\partial (\rho w)}{\partial \Vec{\mathbf{n}}}=\rho\frac{\partial w}{\partial \Vec{\mathbf{n}}}+w\frac{\partial \rho}{\partial \Vec{\mathbf{n}}}<w\left(C_0\rho+\frac{\partial \rho}{\partial \Vec{\mathbf{n}}}\right) = 0.
\end{equation}
By the choice of $\rho$, it is clear that
\begin{equation*}\label{rho-2}
 \psi(z,t) \leq w(z,t)\leq w(x_0,t_0) =  \psi(x_0,t_0) \quad\text{ for } (z,t) \in \left( \ol \Omega \setminus B \right)\times [0, T],
\end{equation*}
and, by \eqref{rho-1},
\begin{equation}\label{rho-3}
\max_{\overline{\Omega}\times[0,T]}\rho w = \max_{\overline{B}\times[0,T]}\rho w > \psi(x_0,t_0)=w(x_0,t_0).
\end{equation}
Let $(x_1,t_1)\in\argmax_{\overline{\Omega}\times[0,T]}\rho w$.
 If $t_1=0$, then for all $(x,t)\in\overline{\Omega}\times[0, T]$,
\begin{align*}
w(x,t)&\leq w(x_0,t_0)=\rho(x_0)w(x_0,t_0)\leq\rho(x_1)w(x_1,0) \\
&\leq  \left(\frac{C_0 K_0}{4}+1 \right)\left(\|Du_0\|_{L^\infty(\ol \Omega)} + 1\right),
\end{align*}
and we are done. 
Thus, we may assume that $t_1>0$. 
In light of \eqref{rho-1}--\eqref{rho-3}, we yield that $x_1\in B \subset \Omega$. 
At this point $(x_1,t_1)$, we have $\psi_t\geq0,$ $D\psi=0,$ $D^2\psi\leq0$. 
Consequently, as $\psi_t=\rho_t w+\rho w_t$, $D\psi=wD\rho+\rho Dw$, and $\psi_{ij}=w_{ij}\rho+w_i\rho_j+w_j\rho_i+w\rho_{ij}$, we have at $(x_1,t_1)$,

\[
\begin{cases}
w_t\geq-\frac{\rho_t}{\rho}w=0,\\
 Dw=-\frac{w}{\rho}D\rho,\\
w_{ij}=\frac{1}{\rho}(\psi_{ij}-w_i\rho_j-w_j\rho_i-w\rho_{ij}).
\end{cases}
\]

\noindent Therefore, at $(x_1,t_1)$, by \eqref{eq:w}
\begin{multline*}
-\frac{\rho_t}{\rho}w^2+\frac{w^2}{\rho}(D_{p}b^{ij}\cdot D\rho)u_{ij}+\frac{w}{\rho}b^{ij}(w_i\rho_j+w_j\rho_i+w\rho_{ij})\\+b^{ij}b^{kl}u_{ki}u_{lj}-wDu\cdot Dc+\frac{cw}{\rho}Du\cdot D\rho\leq0.
\end{multline*}
Now,
\[
b^{ij}_{p_l}=-\frac{\delta_{il}u_j}{\varepsilon^2+|Du|^2}-\frac{\delta_{jl}u_i}{\varepsilon^2+|Du|^2}+\frac{2u_iu_ju_l}{(\varepsilon^2+|Du|^2)^2},
\]
and thus,
\begin{align*}
w(D_pb^{ij}\cdot D\rho)u_{ij}&=w\left(-\frac{\rho_iu_ju_{ij}}{\varepsilon^2+|Du|^2}-\frac{\rho_ju_iu_{ij}}{\varepsilon^2+|Du|^2}+\frac{2u_iu_ju_l\rho_lu_{ij}}{(\varepsilon^2+|Du|^2)^2}\right)\\
&=-2Dw\cdot D\rho+\frac{2(Du\cdot D\rho)(Du\cdot Dw)}{w^2}.
\end{align*}
Hence,
\begin{align*}
&w(D_pb^{ij}\cdot D\rho)u_{ij}+b^{ij}w_i\rho_j+b^{ij}w_j\rho_i\\
=\ &\frac{2(Du\cdot D\rho)(Du\cdot Dw)}{w^2}-\frac{u_iu_jw_i\rho_j}{w^2}-\frac{u_iu_jw_j\rho_i}{w^2}=0.
\end{align*}
All in all, at $(x_1,t_1)\in\argmax_{\overline{\Omega}\times(0,T]}\rho w$ with $x_1\in B \subset \Omega$, the inequality
\begin{equation}\label{rho-4}
-\frac{\rho_t}{\rho}w^2+\frac{\rho_{ij}}{\rho}b^{ij}w^2+b^{ij}b^{kl}u_{ki}u_{lj}-wDu\cdot Dc+\frac{cw}{\rho}Du\cdot D\rho\leq0
\end{equation}
holds.
Note that $\rho_t=0$ here, but we keep this term in the above formula for the usage in the proof of Theorem \ref{thm:local-grad-ep} later.

Using the Cauchy-Schwarz type inequality as in the above, we obtain
\begin{align*}
\frac{1}{n}(u_t-c(x_1)w)^2\leq b^{ij}b^{kl}u_{il}u_{kj}&\leq-\frac{w^2}{\rho}b^{ij}\rho_{ij}+wDu\cdot Dc-\frac{cw}{\rho}Du\cdot D\rho\\
&\leq \frac{2C_0}{K_0}\frac{w^2}{\rho} \left(n-\frac{|Du|^2}{\varepsilon^2+|Du|^2}\right)+|Dc|w^2+C_0|c|w^2\\
&\leq\left(\frac{2nC_0}{K_0}+|Dc(x_1)|+C_0|c(x_1)|\right)w^2.
\end{align*}
By \eqref{condition:c}, 
\[
\frac{1}{n}c(x)^2-|Dc(x)|-C_0 |c(x)|-\frac{2n C_0}{K_0}\geq\delta>0\quad\quad\text{for all}\ x\in\overline{\Omega}
\]
for some $\delta>0$, we see that $w(x_1,t_1) \leq \frac{2 M \|c\|_{L^\infty(\ol \Omega)}}{n\delta}$. 
Thus,
\[
w(x_0,t_0) \leq \rho(x_1) w(x_1,t_1) \leq \left(\frac{C_0 K_0}{4}+1\right) \frac{2 M \|c\|_{L^\infty(\ol \Omega)}}{n\delta}.
\]

Now, we handle the case when $C_0=0$. We consider a multiplier
\[
\rho(x)=-\frac{\delta_1}{K_0}|x-x_c|^2+\frac{\delta_1 K_0}{4}+1 \quad \text{ for } x \in \ol \Omega,
\]
where 
\[
\delta_1=\frac{\delta}{2(\|c\|_{L^\infty} + \frac{2n}{K_0})}>0.
\]
Then, at $(x_0,t_0)$,
\[
\frac{\partial w}{\partial \Vec{\mathbf{n}}}\leq C_0\frac{|Du|^2}{w} = 0,
\]
and
\[
\frac{\partial \psi}{\partial \Vec{\mathbf{n}}}=\frac{\partial (\rho w)}{\partial \Vec{\mathbf{n}}}=\rho\frac{\partial w}{\partial \Vec{\mathbf{n}}}+w\frac{\partial \rho}{\partial \Vec{\mathbf{n}}}\leq w\frac{\partial \rho}{\partial \Vec{\mathbf{n}}}<0.
\]
Following the same argument as above with $\delta_1$ in place of $C_0$, we see that
\begin{align*}
\frac{1}{n}(u_t-c(x_1)w)^2 \leq\left(\frac{2n\delta_1}{K_0}+|Dc(x_1)|+C_0|c(x_1)|\right)w^2.
\end{align*}
This inequality, together with the fact that
\[
\frac{1}{n}c(x)^2-|Dc(x)|-\delta_1 |c(x)|-\frac{2n \delta_1}{K_0}\geq \delta-\frac{1}{2}\delta = \frac{1}{2}\delta >0\quad\quad\text{for all}\ x\in\overline{\Omega},
\]
implies \eqref{eq:bound-w-ep}.

By \eqref{eq:bound-w-ep} and Lemma~\ref{lem:time-grad}, $Du^{\varepsilon}$ and $u^{\varepsilon}_t$ are uniformly bounded in $\Omega \times [0,T]$ for all $\varepsilon \in (0,1)$ and $T>0$. 
Note that the bound depends only on $\|u_0\|_{C^2(\ol \Omega)}$, $\|c\|_{C^1(\ol \Omega)}$, the constants $n$,  $C_0$, $K_0$, and $\delta$ from \eqref{condition:c}.
By approximations, we see that the same result holds true in the case that $\partial \Omega \in C^{2,\theta}$ and $c\in C^{1}(\ol \Omega)$.
From the uniform convergence of $u^{\varepsilon}$ to the unique viscosity solution $u$ of \eqref{eq:levelset1}--\eqref{eq:levelset3}, we conclude that $u$ satisfies \eqref{eq:lip}.
\end{proof}

We remark for later usage that for any smooth function $\rho>0$, \eqref{rho-4} is valid at $(x_1,t_1)\in \argmax \left(\rho w\right) \cap \left(\Omega\times(0,T]\right)$.

\begin{remark} \label{rem:GOS}
Let us discuss a bit the case where $c\equiv 0$ and $\Omega$ is convex and bounded.
Then, $w$ satisfies
$$
ww_t-w(D_pb^{ij}\cdot Dw)u_{ij}-wb^{ij}w_{ij}+b^{ij}b^{kl}u_{ki}u_{lj}=0.
$$
And, on $\partial \Omega \times [0,T]$,
$$
\frac{\partial w}{\partial \Vec{\mathbf{n}}}=\frac{D^2u Du}{w}\cdot\Vec{\mathbf{n}}=-\frac{D\Vec{\mathbf{n}}Du\cdot Du}{w}\leq 0.
$$
By the usual maximum principle, we yield that
\[
\max_{\ol \Omega \times [0,T]} w = \max_{\ol \Omega} w(\cdot,0) \leq C.
\]
We thus recover the gradient bound in \cite{GOS}.
It is worth to note that in this specific situation, condition \eqref{condition:c} is not needed.
\end{remark}

\begin{proof}[Proof of Theorem \ref{thm:local-grad-ep}] 
Let $u=u^\ep$ and $w=\sqrt{\varepsilon^2+|Du^{\varepsilon}|^2}$ as in the proof of Theorem~\ref{thm:global-grad}.
As above, we may assume   $\partial \Omega$ is smooth and $c\in C^\infty(\ol \Omega)$.
Pick 
\[
M>\frac{2n (|C_0|+1)}{K_0}+\lVert Dc\rVert_{L^\infty(\ol \Omega)}+(|C_0|+1) \lVert c\rVert_{L^\infty(\ol \Omega)}
\]
and  $(x_0,t_0)\in\argmax_{\overline{\Omega}\times[0,T]}e^{-Mt}w(x,t)$.
If $t_0=0$, then we have that for $(x,t) \in \ol \Omega \times [0,T]$,
\[
w(x,t) \leq e^{MT}\left(\|Du_0\|_{L^\infty(\ol \Omega)} + 1\right).
\]
Consider next the case that $t_0>0.$ 
If $x_0\in\Omega$, then by \eqref{rho-4} with $\rho=e^{-Mt}$, at $(x_0,t_0)$, 
\[
Mw^2+b^{ij}b^{kl}u_{il}u_{kj}-wDu\cdot Dc\leq0.
\]
As $Mw^2-wDu\cdot Dc>0$ by the choice of $M$ and $b^{ij}b^{kl}u_{il}u_{kj}\geq0$, we arrive at a contradiction.
Thus, $x_0\in\partial\Omega$.

We repeat the proof of Theorem  \ref{thm:global-grad}.
Since $x_0 \in \argmax_{\ol \Omega} w(\cdot,t_0) \cap \partial \Omega$, we see as before that $C_0 \geq 0$.
We use a new multiplier
 \[
\rho(x,t)=e^{-Mt} \left(-\frac{C_0+1}{K_0}|x-x_c|^2+\frac{(C_0+1) K_0}{4}+1 \right) \quad \text{ for } (x,t) \in \ol \Omega \times [0,\infty).
\]
Here, $B=B(x_c, K_0/2)$ is inside $\Omega$ and tangent to the boundary $\partial\Omega$ at $x_0$. 

Put $w_M=e^{-Mt} w$ and note that $w_M(x_0,t_0)=\max_{\ol \Omega} w_M$, $\frac{\partial w_M}{\partial \Vec{\mathbf{n}}}\leq C_0w_M$ on $\partial \Omega \times [0,T]$, and
\[
\rho w =  \left(-\frac{C_0+1}{K_0}|x-x_c|^2+\frac{(C_0+1) K_0}{4}+1 \right)w_M.
\]
Observe as in the proof of Theorem  \ref{thm:global-grad} that  $\frac{\partial (\rho w)}{\partial \Vec{\mathbf{n}}}(x_0,t_0)<0$, $\rho w \leq w_M$ on $\left(\ol \Omega \setminus B \right) \times [0,T]$, and therefore, $\argmax \left(\rho w\right) \subset B \times [0,T]$.
Then, there is a point $(x_1,t_1)\in\argmax_{\overline{\Omega}\times[0,T]}\rho w$ with $(x_1,t_1)\in B\times [0,T]$.
Consider the case $t_1=0$.
For all $(x,t) \in \ol \Omega \times [0,T]$,
\begin{align*}
w_M(x,t) &\leq w_M(x_0,t_0) = (\rho w)(x_0,t_0) \leq (\rho w)(x_1,0)\\
&\leq  \left(\frac{(C_0+1) K_0}{4}+1 \right)\left(\|Du_0\|_{L^\infty(\ol \Omega)} + 1\right).
\end{align*}
Thus, for $(x,t) \in \ol \Omega \times [0,T]$,
\begin{equation}\label{eq:bound-w-final}
w(x,t) \leq e^{MT}\left(\frac{(C_0+1) K_0}{4}+1 \right)\left(\|Du_0\|_{L^\infty(\ol \Omega)} + 1\right).
\end{equation}
Next, we consider the case $t_1>0$.
At $(x_1,t_1)$, thanks to \eqref{rho-4}, we have
\[
Mw^2+\frac{\rho_{ij}}{\rho}b^{ij}w^2+b^{ij}b^{kl}u_{ki}u_{lj}-wDu\cdot Dc+\frac{cw}{\rho}Du\cdot D\rho \leq0.
\]
From this, recalling the choice of $M$, we obtain, as before,
\[
0 \leq b^{ij}b^{kl}u_{ki}u_{lj} \leq \left(-M +\frac{2n(C_0+1)}{K_0}+ |Dc(x_1)|+(C_0+1)|c(x_1)| \right) w^2 <0,
\]
which is absurd.
Thus, the case $t_1>0$ does not occur, and \eqref{eq:bound-w-final} holds true.
Lemma~\ref{lem:time-grad} and  \eqref{eq:bound-w-final}  then complete the proof.

\end{proof}

\begin{remark}
We note that Theorems \ref{thm:local-grad} and \ref{thm:global-grad} are still valid  when $\partial \Omega \in C^2$, $c\in C^1(\ol \Omega)$, and $u_0 \in C^2(\ol \Omega)$ by approximations as the Lipschitz bounds depend only on $\|u_0\|_{C^2(\ol \Omega)}$, $\|c\|_{C^1(\ol \Omega)}$, the constants $n$, $C_0$, $K_0$, and $T>0$ in case of Theorem \ref{thm:local-grad}, and $\delta$ from \eqref{condition:c} in case of Theorem \ref{thm:global-grad} .
\end{remark}


\section{Large time behavior of the solution} \label{sec:largetime} 
In this section, we prove the large time behavior of $u$, which is globally Lipschitz continuous thanks to Theorem \ref{thm:global-grad}. 
Let $L$ be the spatial Lipschitz constant of $u^\ep$ for $\ep \in (0,1)$ given by the proof of  Theorem \ref{thm:global-grad}. 

\begin{proof}[Proof of Theorem \ref{thm:asym}]
Although the proof is almost same as that of \cite[Theorem 1.2]{GTZ}, we give it for completeness.

We consider the following Lyapunov function
$$
I^{\varepsilon}(t)= \int_{\Omega}\sqrt{\varepsilon^2+|D u^{\varepsilon}|^2}\,dx.
$$
By calculation,
\begin{eqnarray*}
\frac{{\rm d}}{{\rm d}t}\int_{\Omega}\sqrt{\varepsilon^2+|D u^{\varepsilon}|^2}\,dx=\int_{\Omega}\frac{D u^{\varepsilon}\cdot D u_t^{\varepsilon}}{\sqrt{\varepsilon^2+|D u^{\varepsilon}|^2}}\,dx=-\int_{\Omega}u_t^{\varepsilon}\div\left(\frac{D u^{\varepsilon}}{\sqrt{\varepsilon^2+|D u^{\varepsilon}|^2}}\right)\,dx,
\end{eqnarray*}
and thus,
\begin{eqnarray*}
\frac{{\rm d}}{{\rm d}t}\int_{\Omega}\sqrt{\varepsilon^2+|D u^{\varepsilon}|^2}\,dx&=&-\int_{\Omega}u_t^{\varepsilon}\left(\frac{u^{\varepsilon}_t}{\sqrt{\varepsilon^2+|Du^{\varepsilon}|^2}}-c(x)\right)\,dx\\
&=&-\int_{\Omega}\left(\frac{(u_t^{\varepsilon})^2}{\sqrt{\varepsilon^2+|D u^{\varepsilon}|^2}}-c(x)u_t^{\varepsilon}\right)\,dx\\
&\leq&-\frac{1}{\sqrt{\varepsilon^2+L^2}}\int_{\Omega}(u_t^{\varepsilon})^2dx+\int_{\Omega}c(x) u_t^{\varepsilon}\,dx.
\end{eqnarray*}

Rearranging the terms, 
$$
\frac{\rm d}{{\rm d}t}\left(\int_{\Omega}\sqrt{\varepsilon^2+|D u^{\varepsilon}|^2}\,dx-\int_{\Omega}c(x) u^{\varepsilon}\,dx\right)\leq-\frac{1}{\sqrt{\varepsilon^2+L^2}}\int_{\Omega}(u_t^{\varepsilon})^2\,dx.
$$
Integrating the inequality above, we have
\begin{eqnarray*}
\int_0^{T}\int_{\Omega}(u_t^{\varepsilon})^2\,dxdt&\leq& \sqrt{\varepsilon^2+L^2}\int_{\Omega}c(x)(u^{\varepsilon}(x,T)-u^{\varepsilon}(x,0))\,dx\\
&+&\sqrt{\varepsilon^2+L^2}\int_{\Omega}\left(\sqrt{\varepsilon^2+|D u^{\varepsilon}|^2(x,0)}-\sqrt{\varepsilon^2+|D u^{\varepsilon}|^2(x,T)}\,\right)\,dx.
\end{eqnarray*}
Note that $\|u\|_{L^\infty(\ol \Omega \times [0,\infty))} \leq \|u_0\|_{L^\infty (\ol \Omega)}$.  
Therefore,
$$
\limsup_{\varepsilon\to0}\int_0^{T}\int_{\Omega}(u_t^{\varepsilon})^2\,dxdt\leq C,
$$
where $C$ is a constant independent of $\varepsilon \in (0,1)$ and $T>0$. 
Hence, we  get that $u^{\varepsilon}_t\rightharpoonup u_t$ weakly in $L^2(\overline{\Omega}\times[0,T])$ as $\varepsilon\rightarrow 0$ for each $T>0$.


By weakly lower semi-continuity, 
\begin{equation*}\label{eq:lowersemi}
\int_0^{T}\int_{\Omega}(u_t)^2\,dxdt\leq \liminf\limits_{\varepsilon\rightarrow 0}\int_0^{T}\int_{\Omega}(u_t^{\varepsilon})^2\,dxdt\leq C.
\end{equation*}
Since the constant $C$ is independent of $\varepsilon,\ T$, we see that
\begin{equation}\label{eq:l2-b}
\int_0^{\infty}\int_{\Omega}(u_t)^2\,dxdt\leq C.
\end{equation}
For every $\{t_k\}\rightarrow\infty$, by the Arzel\`a-Ascoli theorem, there exist a subsequence $\{t_{k_j}\}$ and a Lipschitz continuous function $v$ such that
\[
u_{k_j}(x,t)=u(x,t+t_{k_j})\rightarrow v(x,t),
\]
locally uniformly on $\overline{\Omega}\times[0,\infty)$. In particular,
\begin{equation}\label{eq:converg1}
u_{k_j}(x,t)=u(x,t+t_{k_j})\rightarrow v(x,t),
\end{equation}
uniformly on $\overline{\Omega}\times[0,T]$, for every $T>0$. 
By stability results of viscosity solutions, $v$ satisfies
\begin{equation*}
\begin{cases}
v_t=|D v|\text{div}\left(\frac{D v}{|D v|}\right)+c|D v| \quad &\text{ in }  \Omega\times(0,\infty),\\
\displaystyle \frac{\partial v}{\partial \Vec{\mathbf{n}}}=0 \quad &\text{ on } \partial\Omega\times[0,\infty).
\end{cases}
\end{equation*}
Thanks to \eqref{eq:l2-b}, we have
\[
\int_0^1\int_{\Omega}(u_{k_j})_t^2\,dxdt=\int_{t_{k_j}}^{1+t_{k_j}}\int_{\Omega}(u_t)^2\,dxdt\rightarrow0,
\]
as $j\rightarrow\infty$. This shows that 
\[
(u_{k_j})_t\rightharpoonup0,
\]
weakly in $L^2(\overline{\Omega}\times[0,1])$ as $j\rightarrow\infty$. On the other hand, (\ref{eq:converg1}) implies that
\[
(u_{k_j})_t\rightharpoonup v_t,
\]
weakly in $L^2(\overline{\Omega}\times[0,1])$ as $j\rightarrow\infty$. Consequently, $v_t=0$ weakly, and $v$ is constant in $t$. Thus, $v$ is a solution of \eqref{eq:stationary}, that is, $v$ solves
\begin{equation*}
\begin{cases}
|D v|\div \left(\frac{D v}{|D v|}\right)+c(x)|D v|=0 \quad &\text{ in } \Omega,\\
\displaystyle \frac{\partial v}{\partial \Vec{\mathbf{n}}}=0\quad &\text{ on } \partial\Omega.
\end{cases}
\end{equation*}
Equation \eqref{eq:stationary} has many viscosity solutions in general. 
For example, as $v$ is a solution, $v+C$ is also a solution for any $C\in \R$.
Therefore, $v$ may depend on the choice of subsequence $\{t_k\}_{k}$.

At last, we prove that $v$ is independent of the choice of subsequence $\{t_k\}_{k}$. Since $u_{k_j}$ converges uniformly to $v$ on $\overline{\Omega}\times[0,1]$, for every $\varepsilon>0$ there exists $j$ large enough such that
$$
|u_{k_j}(x,t)-v(x)|<\varepsilon,\quad\ \text{for all} \ (x,t)\in\overline{\Omega}\times[0,1].
$$
In particular, $v(x)-\varepsilon<u_{k_j}(x,0)=u(x,t_{k_j})<v(x)+\varepsilon$ for all $x\in \overline{\Omega}$. 
By the comparison principle, 
$$
v(x)-\varepsilon\leq u(x,t)\leq v(x)+\varepsilon \quad \text{ for $(x,t)\in\overline{\Omega}\times[t_{k_j},\infty)$.}
$$
This implies that $u(\cdot,t)$ converges uniformly to $v$ on $\overline{\Omega}$ without taking a subsequence.
\end{proof}

\section{The large time profile in the radially symmetric setting}\label{sec:blow-up}
In this section, we study the radially symmetric setting and illustrate some examples of multiplicity of solutions to the stationary problem \eqref{eq:stationary}. 
We always assume here \eqref{con:radial}, that is,
\begin{equation*}
\begin{cases}
\Omega = B(0,R) \text{ for some } R>0,\\
c(x) = c(r) \text{ for } |x|=r \in [0, R],\\
u_0(x)= u_0(r)  \text{ for } |x|=r \in [0, R].
\end{cases}
\end{equation*}
Here, $c \in C^1([0,R], [0,\infty))$, and $u_0 \in C^2([0,R])$ with $u_0'(R)=0$ are given.
In this setting, \eqref{eq:stationary} reduces to the following Hamilton-Jacobi equation with Neumann boundary condition
\begin{equation}\label{eq:radialsta}
\begin{cases}
-\frac{n-1}{r}\phi_r-c(r)\lvert\phi_r\rvert=0, \quad &\text{ in } (0,R),\\
\displaystyle\hspace{22mm} \phi_r(R)=0.\quad &\text{} 
\end{cases}
\end{equation}

It is worth noting that no boundary condition is needed at $r=0$, and that the Hamiltonian is concave and maybe noncoercive.
Clearly, every constant is a solution to \eqref{eq:radialsta}.
Also, if $\phi$ is a solution to \eqref{eq:radialsta}, then so is $C\phi$ for any given constant $C\geq 0$.

We have the following proposition.

\begin{proposition}\label{prop:radconst}
Let $\mathcal{A}=\left\{r\in(0,R]\,:\,c(r)= \frac{n-1}{r}\right\}$.
Denote by 
\[
r_{\min} = 
\begin{cases}
\min\{r\,:\, r\in \cA\} >0 \quad &\text{ if }  \cA \neq \emptyset,\\
R \quad &\text{ if }  \cA=\emptyset.
\end{cases}
\]
 Let $\phi$ be a Lipschitz solution to \eqref{eq:radialsta}. 
 Then, $\phi$ is constant on each connected component of $(0,R)\setminus\mathrm{int}(\mathcal{A})$.
 In particular, $\phi$ is constant on $[0,r_{\min}]$.
\end{proposition}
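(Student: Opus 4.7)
The plan is to exploit the Lipschitz regularity of $\phi$ together with a direct analysis of \eqref{eq:radialsta} at Lebesgue points of differentiability. Since $\phi$ is Lipschitz on $(0,R]$, it is absolutely continuous and differentiable almost everywhere; at each such $r\in(0,R)$ the equation $-\tfrac{n-1}{r}\phi'(r)-c(r)|\phi'(r)|=0$ holds classically. The sign of $\phi'(r)$ is then forced: $\phi'(r)>0$ would give $-\bigl(\tfrac{n-1}{r}+c(r)\bigr)\phi'(r)=0$, impossible since the bracket is strictly positive, while $\phi'(r)<0$ reduces the equation to $\bigl(\tfrac{n-1}{r}-c(r)\bigr)|\phi'(r)|=0$ and forces $c(r)=\tfrac{n-1}{r}$, i.e.\ $r\in\cA$. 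Thus $\phi'\leq 0$ almost everywhere (so $\phi$ is nonincreasing on $(0,R]$) and $\phi'(r)=0$ at almost every $r\in(0,R)\setminus\cA$.

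The ``in particular'' claim is then immediate. By the definition of $r_{\min}$, the interval $(0,r_{\min})$ is disjoint from $\cA$ (vacuously if $\cA=\emptyset$, in which case $r_{\min}=R$). Hence $\phi'\equiv 0$ a.e.\ on $(0,r_{\min})$; absolute continuity yields $\phi$ constant on $(0,r_{\min}]$, and continuity extends this to $[0,r_{\min}]$.

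For the general assertion I would fix a connected component $J$ of $(0,R)\setminus\mathrm{int}(\cA)$ and take arbitrary $a<b$ in $J$. By absolute continuity and the first step,
\[
\phi(b)-\phi(a)\;=\;\int_a^b\phi'(r)\,dr\;=\;\int_{(a,b)\cap\cA}\phi'(r)\,dr,
\]
because $\phi'=0$ almost everywhere off $\cA$. Since $(a,b)\subseteq J\subseteq(0,R)\setminus\mathrm{int}(\cA)$, the set $(a,b)\cap\cA$ sits in the topological boundary $\partial\cA$, which in the settings relevant here---where $\cA$ is a finite union of closed intervals and isolated points---has Lebesgue measure zero. The integral therefore vanishes, giving $\phi(a)=\phi(b)$, and hence constancy of $\phi$ on $J$.

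The one step I expect to need extra care is precisely the claim $|\partial\cA|=0$: for completely arbitrary $c\in C^1$, $\partial\cA$ could in principle have positive Lebesgue measure (a fat Cantor-type scenario), and the a.e.\ ODE analysis above would no longer suffice on its own. In such a case one has to propagate the local constancy from the open components of $J\setminus\cA$ (on which $\phi$ is constant, by running the same viscosity argument on the open sets $\cA_{+}$ and $\cA_{-}$ separately) across $\partial\cA$, using continuity of $\phi$ together with the observation that two adjacent components of $J\setminus\cA$ must in fact share a common endpoint in $\partial\cA$---otherwise the open interval between them would lie in $\cA$, hence in $\mathrm{int}(\cA)$, contradicting $J\cap\mathrm{int}(\cA)=\emptyset$.
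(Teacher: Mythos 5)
Your proof follows essentially the same route as the paper's: factor the equation at points of differentiability, observe that $\phi'(r)>0$ is impossible and $\phi'(r)<0$ forces $r\in\cA$, deduce $\phi'=0$ a.e.\ off $\cA$, and integrate over an interval disjoint from $\mathrm{int}(\cA)$. The paper's proof is in fact terser: it simply asserts $\phi_r=0$ a.e.\ on $(a,b)$ and concludes, so it implicitly uses the same measure-theoretic fact you flag, namely that $(a,b)\cap\cA$ is Lebesgue-null. Your observation that $(a,b)\cap\cA\subset\partial\cA$ and that this could fail to be null for a general $c\in C^1$ (fat-Cantor zero set of $c(r)-\tfrac{n-1}{r}$) is correct, and the paper does not address it either.

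However, the workaround you sketch does not close the gap. If $\cA\cap J$ were a positive-measure, nowhere dense perfect set, then no two components of $J\setminus\cA$ are adjacent: between any two complementary gaps of such a set lie infinitely many others, since otherwise the open interval between them would lie in $\cA$ and hence in $\mathrm{int}(\cA)$. Thus there is no base case from which your propagation-by-shared-endpoint argument can start, and continuity of $\phi$ alone does not let you pass from one gap to the next across a set of positive measure. In all the situations the paper actually uses the proposition (e.g.\ Example~\ref{ex}, where $\cA=[a,b]$), $\partial\cA$ is finite and the issue is vacuous; a proof valid for arbitrary $c\in C^1$ would need either a standing hypothesis that $\partial\cA$ is null, or a genuinely different argument exploiting the viscosity inequalities at points of $\partial\cA$ rather than only the a.e.\ pointwise equation.
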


\begin{proof}
Factoring \eqref{eq:radialsta} into $\left(-\frac{n-1}{r}\pm c(r)\right)\phi_r(r)=0$, we see that either $-\frac{n-1}{r}\pm c(r)=0$ or $\phi_r(r)=0$ at each point of differentiability of $\phi$. 

Take $(a,b) \subset \left((0,R)\setminus\mathrm{int}(\mathcal{A})\right)$ for some $a<b$.
By the above, we have that $\phi_r(r)=0$ for a.e. $r\in (a,b)$, and thus, $\phi$ is constant on $[a,b]$.
\end{proof}

\begin{example}[A toy model]\label{ex}
We consider the case that $c(r)$ is of the form
$$
c(r)=\left\{\begin{array}{ll}
\frac{n-1}{a},\quad\quad\quad\hspace{1mm} &0\leq r< a,\\
\frac{n-1}{r},\quad\quad\quad\hspace{1.5mm} &a\leq r\leq b,\\
\frac{n-1}{b},\quad\quad\quad\hspace{1.5mm} &b< r\leq R,
\end{array}\right.
$$
for some $0<a<b<R,$ then the stationary problem \eqref{eq:radialsta} admits multiple solutions of the form
$$
\phi(r)=\left\{\begin{array}{ll}
c_1,\quad\quad\quad\hspace{4mm} &0\leq r\leq a,\\
g(r),\quad\quad\quad &a\leq r\leq b,\\
c_2,\quad\quad\quad\hspace{4mm} &b\leq r\leq R,
\end{array}\right.
$$
where $c_1\geq c_2$ are constants, $g(r)$ is any nonincreasing function on $[a,b]$ with $g(a)=c_1,\ g(b)=c_2.$ Here, the function $g$ can be discontinuous if we extend the definition of viscosity solutions to discontinuous functions (see \cite{G} for instance).
\end{example}

Example \ref{ex} shows further the multiplicity of solutions to \eqref{eq:radialsta} besides the constant functions noted above. 
Thus, it is important to address how the large-time limit $\phi_{\infty}$ depends on the initial data $u_0$. In this radially symmetric setting, we are able to characterize the limiting profile and specify its dependence on the initial data.

Equations \eqref{eq:levelset1}--\eqref{eq:levelset3} become
\begin{equation*}\label{eq:radial}
\begin{cases}
\phi_t-\frac{n-1}{r}\phi_r-c(r)\lvert \phi_r\rvert=0 \quad &\text{ in } (0,R)\times(0,\infty),\\
\displaystyle \hspace{27.5mm} \phi_r(R,t)=0\quad &\text{ for } t \geq 0, \\
\hspace{26.75mm}\phi(r,0)=u_0(r) \quad &\text{ for } r\in[0,R].
\end{cases}
\end{equation*}
Here, $u(x,t)=\phi(|x|,t)$ for $(x,t)\in B(0,R) \times [0,\infty)$.
Note that this is a first-order Hamilton-Jacobi equation with a concave Hamiltonian.
The associated Lagrangian $L=L(r,q)$ to the Hamiltonian $H(r,p)=-\frac{n-1}{r}p-c(r)\lvert p\rvert$ is
\begin{align*}
L(r,q)&=\inf_{p\in\mathbb{R}}\left\{p\cdot q-\left(-\frac{n-1}{r}p-c(r)\lvert p\rvert\right)\right\}\\
&=\inf_{p\in\mathbb{R}}\left\{\left(q+\frac{n-1}{r}\right)p+c(r)\lvert p\rvert\right\}\\
&=\left\{\begin{array}{ll}
0,\quad\quad\quad\hspace{1.5mm} \textrm{if}\ \left\lvert q+\frac{n-1}{r}\right\rvert\leq c(r),\\
-\infty,\quad\quad\textrm{otherwise}.
\end{array}\right.
\end{align*}
Therefore, we have the following representation formula for $\phi=\phi(r,t)$
$$
\phi(r,t)=\sup\left\{u_0(\gamma(0))\,:\,(\gamma,v,l)\in\mathrm{SP}(r,t)\right\},
$$
where we denote by $\mathrm{SP}(r,t)$ the Skorokhod problem. 
For a given $r\in(0,R],\ v\in L^{\infty}([0,t])$, the Skorokhod problem seeks to find a solution $(\gamma,l)\in \mathrm{Lip}((0,t))\times L^{\infty}((0,t))$ such that
$$
\left\{\begin{array}{ll}
\gamma(t)=r,\quad\quad\hspace{1.5mm} \gamma([0,t])\subset(0,R],\\
l(s)\geq0\quad\quad\quad \textrm{for almost every}\ s>0,\\
l(s)=0\quad\quad\quad \textrm{if}\ \gamma(s)\neq R,\\
\left|-v(s)+\frac{n-1}{\gamma(s)}\right|\leq c(\gamma(s)),\\
v(s)=-\dot{\gamma}(s)+l(s)n(\gamma(s)),
\end{array}\right.
$$
and the set $\mathrm{SP}(r,t)$ collects all the associated triples $(\gamma,v,l).$
Here, $n(R)=1$ is the outward normal vector  to $(0,R)$ at $R$. 
See \cite[Theorem 4.2]{I2} for the existence of solutions of the Skorokhod problem and \cite[Theorem 5.1]{I2} for the representation formula.
See \cite{GMT} for a related problem on large time behavior and large time profile.

\begin{example}\label{ex2}
Consider Example \ref{ex}. 
To recall, $c(r)$ is defined in the following way
$$
c(r)=\left\{\begin{array}{ll}
\frac{n-1}{a},\quad\quad\quad\hspace{1mm} &0\leq r < a,\\
\frac{n-1}{r},\quad\quad\quad\hspace{1.5mm} &a\leq r\leq b,\\
\frac{n-1}{b},\quad\quad\quad\hspace{1.5mm} &b < r\leq R.
\end{array}\right.
$$
for some $0<a<b<R.$ We analyze the velocity condition $\left\lvert\dot{\gamma}(s)+\frac{n-1}{\gamma(s)}\right\rvert\leq c(\gamma(s))$. Note that $c(r)$ is less than $\frac{n-1}{r}$, equal to $\frac{n-1}{r}$, and greater than $\frac{n-1}{r}$ in the written order, respectively. In each case, then, the velocity condition becomes
\begin{equation*}
\begin{cases}
-\frac{n-1}{a}-\frac{n-1}{\gamma(s)}\leq\dot{\gamma}(s)\leq\frac{n-1}{a}-\frac{n-1}{\gamma(s)}<0, \quad &0<\gamma(s)<a,\\
\hspace{8mm}-\frac{2(n-1)}{\gamma(s)}\leq\dot{\gamma}(s)\leq0,\quad &a\leq\gamma(s)\leq b,\\
-\frac{n-1}{b}-\frac{n-1}{\gamma(s)}\leq\dot{\gamma}(s)\leq\frac{n-1}{b}-\frac{n-1}{\gamma(s)}, \quad &b\leq\gamma(s)<R.
\end{cases}
\end{equation*}

\begin{figure}[htbp]
	\begin{center}
            \includegraphics[height=7cm]{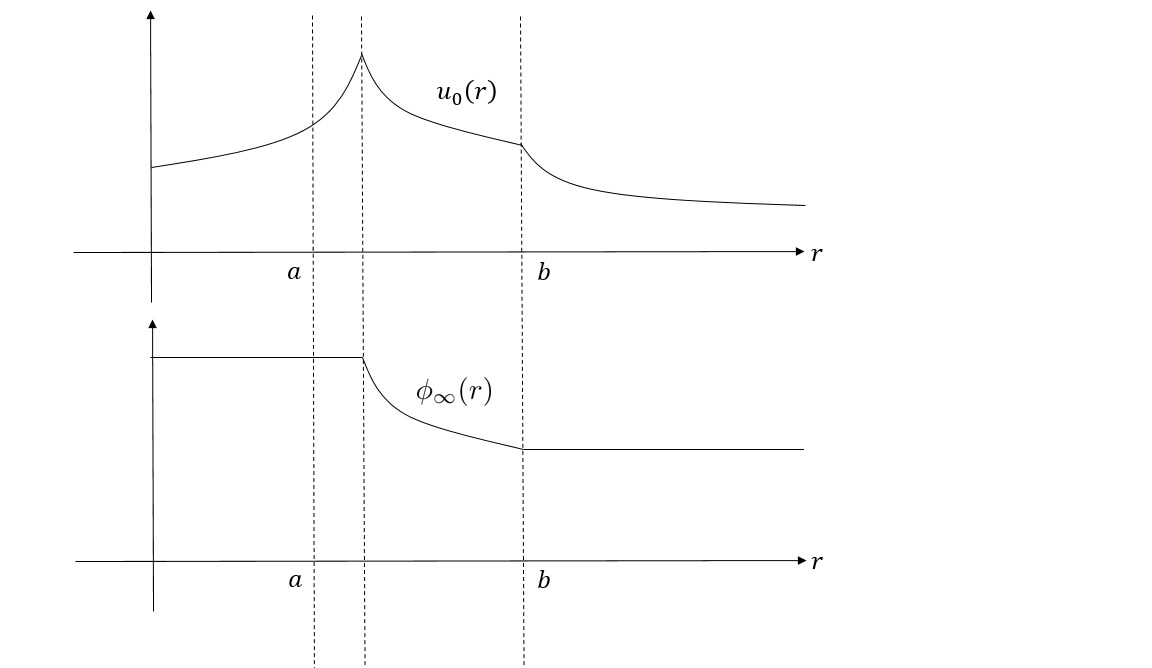}
		\vskip 0pt
		\caption{Stationary solution of (\ref{eq:radialsta})}
        \label{fig:radialsta}
	\end{center}
\end{figure}
Focusing the right hand side in each case, we see that the point $\gamma(s)$ must move left as time $s$ increases, can stay still, and can go right in the written order, respectively. 
This point of view in terms of the Lagrangian $L(r,q)$ and Proposition \ref{prop:radconst} explain the limit $\phi_{\infty}(r)$ of $\phi(r,t)$ as $t\to\infty$ in the above illustration of Figure \ref{fig:radialsta}.

\end{example}

The description in Example \ref{ex2} shows how to formulate and write the limit $\phi_{\infty}$ in terms of the initial data $u_0$ in full generality.
 We note one more thing on the boundary.
If $c(h)<\frac{n-1}{h}$ for all $h \in (0,R]$, then the reversed curve $\eta(s):=\gamma(t-s)$ of an admissible curve $\gamma$ must go right, and it stays on the boundary $r=R$ once it reaches there. 
 This is where the effect of the Skorokhod problem comes in, and it means that the solution $\phi(r,t)$ needs to be understood in the sense of viscosity solutions. 
 We also note that in this setting, we can prove that $\phi$ is same as the value function of the state constraint problem. 
 Together with this observation on the boundary, analyzing curves $\gamma(s)$ explains how the limit $\phi_{\infty}$ depends on the initial data $u_0$, and indeed the analysis of admissible curves yields the proof of Theorem \ref{thm:radlimit}.

\smallskip

We now give some preparation steps in order to prove Theorem \ref{thm:radlimit}. 
Let $\eta(s):=\gamma(t-s),\ s\in[0,t],$ be the reversed curve of a curve $\gamma\in\AC([0,t],(0,R])$ with $(\gamma,v,l)\in\textrm{SP}(r,t)$. 
Then, we have the following velocity condition for $\eta$
\begin{equation}\label{velocityeta}
-c(\eta(s))+\frac{n-1}{\eta(s)}\leq \dot{\eta}(s)\leq c(\eta(s))+\frac{n-1}{\eta(s)}\quad\textrm{for a.e. }s\in[0,t]\ \textrm{with }\eta(s)\neq R.
\end{equation}

The following lemma is a direct consequence of the comparison principle.
\begin{lemma}\label{lemma:5-1}
Let $r_0\in(0,R)$. 
Let $\eta_1\in\AC([0,\infty),(0,R])$ be a curve satisfying
\begin{equation*}
\begin{cases}
\dot{\eta}_1(s)=-c(\eta_1(s))+\frac{n-1}{\eta_1(s)}, \quad &\textrm{for $s>0$ provided that $\eta_1(s)<R$},\\
\eta_1(0)=r_0.&
\end{cases}
\end{equation*}
If $\eta_1(s_0)=R$ for some $s_0>0$, then we set $\eta_1(s)=R$ for all $s\geq s_0$.

For each $t>0$, let $\eta\in\AC([0,t],(0,R])$ be the reversed curve given above with $\eta(0)\geq r_0$. 
Then, $\eta_1(s)\leq \eta(s)$ for all $s\in[0,t]$.
\end{lemma}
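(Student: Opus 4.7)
The plan is to carry this out as a standard ODE comparison argument using the Lipschitz continuity of $f(r) := -c(r) + \frac{n-1}{r}$ on compact subsets of $(0, R]$. Note that $f \in C^1((0, R])$, and, by definition, $\eta_1$ satisfies $\dot{\eta}_1 = f(\eta_1)$ a.e.\ on $\{\eta_1 < R\}$, while the lower bound in \eqref{velocityeta} gives the one-sided differential inequality $\dot{\eta} \geq f(\eta)$ a.e.\ on $\{\eta < R\}$. The initial ordering is $\eta_1(0) = r_0 \leq \eta(0)$.

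First, I would establish a uniform lower bound away from zero for both curves. Since $\eta, \eta_1 \in \AC([0,t], (0,R])$ are continuous on the compact interval $[0,t]$ with positive values, there exists $a > 0$ such that $\eta(s), \eta_1(s) \in [a, R]$ for all $s \in [0, t]$. On $[a, R]$, the function $f$ is Lipschitz with some constant $L > 0$ depending on $\|c\|_{C^1}$, $a$, and $n$. Next, I would argue by contradiction: suppose $g(s^*) := \eta_1(s^*) - \eta(s^*) > 0$ for some $s^* \in (0, t]$. Since $g(0) \leq 0$ and $g$ is continuous, there exists $s_0 \in [0, s^*)$ with $g(s_0) = 0$ and $g > 0$ on $(s_0, s^*]$. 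On this interval, $\eta_1 > \eta$ combined with $\eta_1 \leq R$ forces $\eta < R$, so \eqref{velocityeta} applies and gives $\dot{\eta} \geq f(\eta)$ a.e.

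Set $s_1 := \inf\{s \in (s_0, s^*] : \eta_1(s) = R\}$, with the convention $s_1 = s^*$ if $\eta_1 < R$ throughout $(s_0, s^*]$. On $[s_0, s_1)$ we have $\eta_1 < R$, hence $\dot{\eta}_1 = f(\eta_1)$ a.e., which yields
\[
\dot{g}(s) \leq f(\eta_1(s)) - f(\eta(s)) \leq L\, g(s) \quad \text{a.e.\ on } [s_0, s_1).
\]
Since $g(s_0) = 0$, Gr\"onwall's inequality gives $g(s) \leq 0$ for all $s \in [s_0, s_1)$, contradicting $g > 0$ on $(s_0, s^*]$ (either immediately on $(s_0, s_1)$, or by continuity at $s_1$ in the case $s_1 < s^*$, since then $g(s_1) = R - \eta(s_1) > 0$).

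The main obstacle to watch is the boundary behavior at $r = R$, where the ODE for $\eta_1$ changes character (to constancy) and where the velocity bound \eqref{velocityeta} is not available for $\eta$. This is dealt with by the observation above: on the hypothetical bad interval, $\eta$ is automatically bounded strictly below $R$ (because $\eta < \eta_1 \leq R$), so the differential inequality for $\eta$ is valid, and the comparison only needs to be run up to the first instant $s_1$ at which $\eta_1$ hits $R$, by which point Gr\"onwall has already forced $g \leq 0$.
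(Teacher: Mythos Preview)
Your approach matches the paper's, which simply cites the comparison principle and gives no proof; your Gr\"onwall argument is exactly an implementation of that principle. There is, however, one edge case you have not covered: if $\eta_1$ has already reached $R$ by time $s_0$, then $\eta_1(s_0)=R$, your $s_1$ equals $s_0$, the interval $[s_0,s_1)$ is empty, and neither branch of your contradiction fires (in particular $s_1=s_0\notin(s_0,s^*]$, so you cannot conclude $g(s_1)>0$).

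This case is easily ruled out. If $\eta_1$ reaches $R$ in finite time from $r_0<R$, then necessarily $f(R)=-c(R)+\tfrac{n-1}{R}>0$: since $f$ is Lipschitz near $R$, either $f(R)<0$ (which makes $\dot\eta_1<0$ in a left neighborhood of $R$) or $f(R)=0$ (which makes $R$ an equilibrium, unreachable in finite time by ODE uniqueness) would prevent $\eta_1$ from hitting $R$. Now $\eta(s_0)=R$ and $\eta<R$ on $(s_0,s^*]$; on a short interval $(s_0,s_0+\delta)$ where $\eta$ stays close to $R$, the admissibility bound $\dot\eta\ge f(\eta)>0$ integrates to $\eta(s)\ge\eta(s_0)=R$, contradicting $\eta<R$. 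With this observation added, your proof is complete.
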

%

\begin{lemma}\label{lemma:5-2}
Assume the settings of Theorem \ref{thm:radlimit} and Lemma \ref{lemma:5-1}.
Then, 
\begin{equation}\label{eq:lim-eta-1}
\lim_{s\to \infty} \eta_1(s) = d(r_0).
\end{equation}
\end{lemma}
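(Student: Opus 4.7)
The plan is to analyze the autonomous ODE $\dot\eta_1 = f(\eta_1)$, where $f(r) := -c(r) + \frac{n-1}{r}$, which is valid until (if ever) $\eta_1$ reaches $R$, by case-splitting on the position of $r_0$ relative to $\cA = \{r \in (0,R] : f(r) = 0\}$. Since $f \in C^1((0,R])$, standard ODE theory gives local well-posedness and uniqueness. Two preliminary observations will be used throughout: (i) the zeros of $f$ are exactly $\cA$, so $\cA_+ = \{f<0\}$ and $\cA_- = \{f>0\}$; (ii) since $(n-1)/r \to +\infty$ as $r \to 0^+$ while $c$ is bounded, small radii lie in $\cA_-$, and together with continuity of $f$ and the intermediate value theorem this guarantees that $\cA \cap (0, r_0) \neq \emptyset$ whenever $r_0 \in \cA_+$, and that each connected component of $(0, R] \setminus \cA$ lies entirely in $\cA_+$ or entirely in $\cA_-$.

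Next I split into three cases. For $r_0 \in \cA$, uniqueness forces $\eta_1 \equiv r_0$, matching $d(r_0) = r_0$. For $r_0 \in \cA_+$, I set $r^* = d(r_0) = \max(\cA \cap (0, r_0))$; then $(r^*, r_0) \subset \cA_+$ with $f < 0$ there, and uniqueness forbids $\eta_1$ from reaching the stationary point $r^*$. Hence $\eta_1$ stays in $(r^*, r_0]$, is strictly decreasing, and converges to some $\ell \in [r^*, r_0)$; since $\dot\eta_1(s) = f(\eta_1(s)) \to f(\ell)$ and $\eta_1$ converges, one must have $f(\ell) = 0$, i.e., $\ell \in \cA \cap [r^*, r_0) = \{r^*\}$.

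For $r_0 \in \cA_-$ I argue symmetrically. If $\cA \cap (r_0, R] \neq \emptyset$, let $r^{**} = \min(\cA \cap (r_0, R])$; then $(r_0, r^{**}) \subset \cA_-$, so $\eta_1$ is strictly increasing, bounded above by $r^{**} \leq R$ via the same no-crossing-stationary-point argument (so $\eta_1$ never reaches $R$), and the monotone-convergence-plus-zero-of-$f$ step yields $\lim \eta_1 = r^{**}$. If $\cA \cap (r_0, R] = \emptyset$, then by the monochromaticity observation $(r_0, R] \subset \cA_-$, so $\eta_1$ is strictly increasing on its trajectory; it either reaches $R$ in finite time and is frozen there, or converges to some $\ell \in (r_0, R]$, in which case the absence of a zero of $f$ in $(r_0, R)$ forces $\ell = R$. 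Thus $\lim \eta_1 = R = d(r_0)$.

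The main subtlety is the \emph{``limits are stationary points''} step used repeatedly above: if $\eta_1$ is bounded and monotone with $\dot\eta_1 = f(\eta_1)$ and $\eta_1(s) \to \ell$ with $\ell$ in the interior of $(0,R)$, then $\dot\eta_1(s) \to f(\ell)$ by continuity, and $f(\ell) \neq 0$ would make $\dot\eta_1$ eventually bounded away from $0$, contradicting convergence of $\eta_1$. A secondary point requiring care is the behavior at the boundary $r = R$ in the final sub-case; this is handled cleanly by the convention in Lemma \ref{lemma:5-1} which freezes $\eta_1$ at $R$ upon arrival.
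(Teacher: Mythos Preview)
Your proof is correct and follows essentially the same approach as the paper: case-split on whether $r_0$ lies in $\cA$, $\cA_+$, or $\cA_-$, use monotonicity and boundedness of $\eta_1$ to get a limit $\ell$, and then argue that $\dot\eta_1\to f(\ell)$ forces $f(\ell)=0$ so $\ell\in\cA$. The only minor difference is that the paper invokes Lemma \ref{lemma:5-1} for the bound $\eta_1\ge d(r_0)$ in the $\cA_+$ case, whereas you obtain it directly from ODE uniqueness (no crossing of the stationary point $r^*$); your route is arguably cleaner and also makes explicit the two sub-cases of $\cA_-$ and the well-definedness of $d(r_0)$, which the paper leaves implicit.
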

\begin{proof}
If $r_0 \in \cA$, then $\eta_1(s)= r_0$ for all $s\geq0$, and hence \eqref{eq:lim-eta-1} holds.
 
 Next, we only need to consider the case that $r_0 \in \cA_+$ as the proof of the case that $r_0 \in \cA_-$ follows analogously.
It is clear that $\eta_1$ is decreasing, and by Lemma \ref{lemma:5-1}, $\eta_1(s)\geq d(r_0)$ for all $s\geq0$. 
Therefore, $\lim_{s\to\infty}\eta_1(s)$ exists, and
\[
\lim_{s\to\infty}\eta_1(s)=r_1 \geq d(r_0).
\]
This yields further that
\[
\limsup_{s\to\infty}\dot{\eta}_1(s)=0.
\] 
Hence,
\[-c(r_1)+\frac{n-1}{r_1}=0,
\]
which implies that $r_1=d(r_0)$.

\end{proof}

\begin{proof}[Proof of Theorem \ref{thm:radlimit}.]
For $(r_0,t)\in (0,R) \times [0,\infty)$, we have
\[
\phi(r_0,t)=\sup\{u_0(\eta(t)):(\gamma,v,l)\in\textrm{SP}(r_0,t),\ \eta(s)=\gamma(t-s),\ s\in[0,t]\}.
\]
We say that $\eta \in\AC([0,t],(0,R])$ is admissible if $\eta(s)=\gamma(t-s),\ s\in[0,t]$ for some $\gamma$ with $(\gamma,v,l)\in\textrm{SP}(r_0,t)$. 
Let $\eta_1$ be the curve given in the statement of Lemma \ref{lemma:5-1}.
By Lemma \ref{lemma:5-1}, $\eta(s)\geq\eta_1(s)$ for $s\in[0,t]$ for any admissible curve $\eta$.  
From this fact, we see that
\[
\phi(r_0,t)\leq\sup\{u_0(r):r\geq\eta_1(t)\},
\]
and therefore, by Lemma \ref{lemma:5-2},
\[
\limsup_{t\to\infty}\phi(r_0,t)\leq\max\{u_0(r):r\geq d(r_0)\}.
\]

In order to complete the proof, it suffices to show the other direction
\begin{equation}\label{eq:fin}
\liminf_{t\to\infty}\phi(r_0,t)\geq\max\{u_0(r):r\geq d(r_0)\}.
\end{equation}
To show this, let $r_1\in[d(r_0),R]$ be such that 
\[
u_0(r_1)=\max\{u_0(r):r\geq d(r_0)\}.
\]

We consider first the case $r_0 \in \cA$.
Then, $r_1\geq r_0$.
Let $\eta_2$ solve
\begin{equation*}
\begin{cases}
\dot{\eta}_2(s)=c(\eta_2(s))+\frac{n-1}{\eta_2(s)}, \quad &\textrm{for }s>0,\\
\eta_2(0)=r_0.&
\end{cases}
\end{equation*}
Note that $c(r) + (n-1)/r \geq (n-1)/R>0$ for all $r\in (0,R]$.
Then, there is a unique number $t_2\geq0$ such that $\eta_2(t_2)=r_1$. 
Now, for $t\geq t_2$,  let $\eta$ be defined as
\begin{equation*}
\eta(s)=
\begin{cases}
r_0, \quad &\textrm{if }s\leq t-t_2,\\
\eta_2(s-(t-t_2)), \quad&\textrm{if }s\geq t-t_2.
\end{cases}
\end{equation*}
Then, $\eta$ is admissible, and $\phi(r_0,t) \geq u_0(\eta(t))=u_0(r_1)$. 
Thus, \eqref{eq:fin} holds.

Next, we consider the case $r_0\in \cA_+$.
If  $r_1\geq r_0$, then we repeat the above process to conclude.
If $r_1<r_0$, then $r_1\in[d(r_0),r_0)$ necessarily, and in this case, we use the curve $\eta_1$. 
We note that if $r_1>d(r_0)$, then there is a unique number $t_1\geq0$ such that $\eta_1(t_1)=r_1$. 
Now, for $t\geq t_1$,   let $\eta$ be defined as
\begin{equation*}
\eta(s)=
\begin{cases}
r_0, \quad &\textrm{if }s\leq t-t_1,\\
\eta_1(s-(t-t_1)), \quad&\textrm{if }s\geq t-t_1.
\end{cases}
\end{equation*}
Then, the curve $\eta$ is admissible, and $\phi(r_0,t) \geq u_0(\eta(t))=u_0(r_1)$. 
If $r_1=d(r_0)$, we take $\eta=\eta_1$ and recall that $\lim_{t\to\infty}\eta_1(t)=d(r_0)$, which gives  $\phi(r_0,t) \geq u_0(\eta(t)) \to u_0(r_1)$ as $t\to\infty$.
Therefore, \eqref{eq:fin} holds.

Finally, we study the case $r_0\in \cA_-$.
Let $\eta_2, t_2$ be defined as above.
There exists a unique $t_3>0$ such that $\eta_2(t_3)=d(r_0)$.
In this case, $r_1 \geq d(r_0)$ and $t_2\geq t_3$.
For $t\geq t_2$, define
\begin{equation*}
\eta(s)=
\begin{cases}
\eta_2(s), \quad &\textrm{if } 0\leq s \leq t_3,\\
d(r_0), \quad &\textrm{if } t_3\leq s \leq t-(t_2-t_3),\\
\eta_2(s-(t-t_2)), \quad&\textrm{if } t-(t_2-t_3) \leq s\leq t.
\end{cases}
\end{equation*}
Then, $\eta$ is admissible, and $\eta(t)=r_1$, which yields \eqref{eq:fin}.
\end{proof}

Next, we prove Corollary \ref{thm:non-Lip-rad}, and discuss the sharpness of condition \eqref{condition:c}. 

\begin{proof}[Proof of Corollary \ref{thm:non-Lip-rad}]
The values of $\phi_{\infty}$ are computed directly from Theorem \ref{thm:radlimit}. 
This tells us the fact that the solution $u=u(r,t)$ is not globally Lipschitz because if it were globally Lipschitz, then  the limit $\phi_{\infty}$ would be as well.
\end{proof}

Corollary \ref{thm:non-Lip-rad} realizes a jump discontinuity in the limit, which indicates that condition \eqref{condition:c}, which is needed for the globally Lipschitz continuity of $u$, is almost optimal.
As the domain $\Omega=B(0,R)$ is convex, $C_0 \leq 0$, and \eqref{condition:c} becomes $\frac{1}{n} c(x)^2 - |Dc(x)| -\delta>0$.
Let us now assume that $c(r)$ touches $\frac{n-1}{r}$ from below at $a$.
Then, 
\[
c(a)  = \frac{n-1}{a} \quad \text{ and } \quad c'(a) = -\frac{n-1}{a^2}.
\] 
At $r=a$, we see that
\[
\frac{1}{n} c(a)^2  - |c'(a)| = \frac{(n-1)^2}{n a^2} - \frac{n-1}{a^2}=-\frac{n-1}{n a^2} <0.
\]
Moreover, we see that condition \eqref{condition:c} is essentially optimal if we seek to find sufficient conditions on the force $c$ that are uniform in dimensions $n$ and in $R$ because the left hand side of the above goes to zero as $a \to \infty$.


\section{The gradient growth as time tends to infinity in two dimensions} \label{sec:blow-up2}
Let $n=2$.
Let the forcing term $c$ be a positive constant in $\Omega$, that is, $c(x)=c$ for all $x\in \ol \Omega$ for some $c>0$.
Consider the following nonconvex domain,
\begin{equation}\label{set:omega}
\Omega = \{ (x_1, x_2) \in \mathbb{R}^2 : |x_2| < f(x_1)\},
\end{equation}
where $f(x) = \frac{m}{2} x^2 + k$ for fixed $m>0$ and $k>0$. 
Here, $\Omega$ is unbounded.

In this unbounded setting, let $R_0>0$  be a sufficiently large constant.
Let $\widetilde \Omega \subset \R^n$ be a bounded $C^{2,\theta}$ domain such that
\[
\Omega \cap B(0,R_0) \subset \widetilde \Omega \subset \Omega.
\]
We say that $u$ is a solution (resp., subsolution, supersolution) of \eqref{eq:levelset1}--\eqref{eq:levelset3} on $\ol \Omega \times [0,\infty)$ if there exists $\alpha\in\mathbb{R}$ such that 
\begin{equation}\label{condition:cp}
u - \alpha  =u_0-\alpha= 0 \quad \text{ on } (\ol \Omega \setminus B(0,R_0)) \times [0,\infty),
\end{equation}
and $u$ is a solution (resp., subsolution, supersolution) of \eqref{eq:levelset1}--\eqref{eq:levelset3} with $\widetilde \Omega$ in place of $\Omega$.

Let $u$ be the solution to  \eqref{eq:levelset1}--\eqref{eq:levelset3}.
If  a level set of $u$ is a smooth curve, then it is evolved by the forced curvature flow equation $V=\kappa +c$, where $V$ is the normal velocity and $\kappa$ is the curvature in the direction of the normal.
Then, the classical Neumann boundary condition becomes the right angle condition for the level-set curves with respect to $\partial \Omega$, that is, if a smooth level curve and $\partial \Omega$ intersect, then their normal vectors are perpendicular at the points of intersections.

We show that if $c$ is too small and fails to satisfy  \eqref{condition:c}, then there exist discontinuous viscosity solutions to \eqref{eq:stationary}. 
In particular, we find that one such discontinuous solution of \eqref{eq:stationary}  is stable in the sense that the solution of \eqref{eq:levelset1}--\eqref{eq:levelset3}  with a suitable choice of initial data converges to this discontinuous stationary solution as time goes to infinity. 
This implies that the global Lipschitz estimate for the solution of  \eqref{eq:levelset1}--\eqref{eq:levelset3} does not hold.
The following is the main result of this section.

\begin{theorem}
\label{thm:conv}
Let $\Omega$ be the set given by \eqref{set:omega}, and $c(x)=c$ for all $x\in \ol \Omega$ for 
$c \in (0,r_{\min}^{-1})$, 
where $r_{\min}$ is defined by \eqref{func:rmin}. 
Let $u \in C(\overline{\Omega} \times [0, \infty))$ be the solution of \eqref{eq:levelset1}--\eqref{eq:levelset3} with the given initial data $u_0\in C^{2,\theta}(\overline{\Omega})$ satisfying that $\frac{\partial u_0}{\partial \Vec{\mathbf{n}}}=0\ \text{on}\ \partial\Omega$ and there exist constants $l_1, l_2, \alpha$ and $\beta$ such that $l_1 \in (0, a_1)$, $l_2 \in (0, a_2 - a_1)$, $\alpha < \beta$,
\begin{align}
\label{eqn:1conv}
u_0(x)=
\begin{cases}
\beta \quad &\text{ for } x=(x_1,x_2) \in U(a_1 - l_1),\\
\alpha \quad &\text{ for } x=(x_1,x_2) \in \overline{\Omega}\setminus\overline{U(a_1+l_2)},
\end{cases}
\end{align}
and $ \alpha \leq u_0 \leq \beta$, 
where $U(a)$ is defined by \eqref{eqn:ua} for $a>0$, and $0<a_1<a_2$ is given in Theorem \ref{thm:sta}. 
Then,
\[
\lim_{t \rightarrow \infty} u(x,t) = 
\begin{cases}
\beta \quad &\hbox{ if }x \in U(a_1),\\
\alpha \quad &\hbox{ if }x \in \overline{\Omega}\setminus\overline{U(a_1)}.
\end{cases}
\]
\end{theorem}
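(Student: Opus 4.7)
The argument is geometric, based on sandwiching $u$ between two moving barriers via the comparison principle (Proposition~\ref{prop:comp}), with the stationary curve family provided by Theorem~\ref{thm:sta} acting as an attractor. Define the candidate limit profile
\[
u^*(x) := \beta\,\chi_{U(a_1)}(x) + \alpha\,\chi_{\ol\Omega\setminus \ol{U(a_1)}}(x).
\]
Since $\partial U(a_1)$ is, by Theorem~\ref{thm:sta}, a stationary curve for the forced curvature flow $V=\kappa+c$ meeting $\partial\Omega$ orthogonally, $u^*$ is a (discontinuous) viscosity stationary solution of \eqref{eq:stationary} compatible with \eqref{condition:cp}.

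The plan is to pick smooth initial profiles $\ol u_0,\,\ul u_0$ with $\ul u_0\leq u_0\leq \ol u_0$ on $\ol\Omega$ whose super-level sets $\{\ol u_0>s\}$ and $\{\ul u_0>s\}$ for every $s\in(\alpha,\beta)$ coincide respectively with $U(a_1+l_2)$ and $U(a_1-l_1)$; this collapse is possible because of the step-like structure of $u_0$ guaranteed by \eqref{eqn:1conv}. Denote by $\ol u,\ul u$ the viscosity solutions of \eqref{eq:levelset1}--\eqref{eq:levelset3} issued from these data; the comparison principle gives $\ul u\leq u\leq \ol u$ on $\ol\Omega\times[0,\infty)$.

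The core step is to prove $\ol u(\cdot,t)\downarrow u^*$ and $\ul u(\cdot,t)\uparrow u^*$ pointwise off $\partial U(a_1)$ as $t\to\infty$. Since every super-level set of $\ol u_0$ (respectively $\ul u_0$) is a single set, every super-level set of $\ol u(\cdot,t)$ is the same set $E^+_t$ (respectively $E^-_t$ for $\ul u$), which evolves as the Neumann forced MCF of $\ol{U(a_1+l_2)}$ and $\ol{U(a_1-l_1)}$. Using Theorem~\ref{thm:sta}, I would extract a continuous family of barrier curves $\{\partial U(a)\}$ meeting $\partial\Omega$ orthogonally, along which $\kappa+c<0$ when $a>a_1$ and $\kappa+c>0$ when $a<a_1$; the threshold $c<r_{\min}^{-1}$ is used here to obtain the strict sign of $\kappa+c$ and to ensure that $a_1$ is the only stationary parameter in $(a_1-l_1,a_1+l_2)$. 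Applying level-set avoidance against these moving barriers forces $E^+_t$ to decrease with $\ol{U(a_1)}\subset E^+_t$ and $E^-_t$ to increase with $E^-_t\subset\ol{U(a_1)}$. Monotone convergence together with the standard stability of viscosity solutions then identifies both limits with $\ol{U(a_1)}$, yielding $\ol u,\ul u\to u^*$ pointwise off $\partial U(a_1)$, and the sandwich above finishes the argument.

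The hard part will be the monotone-attraction step. One must verify quantitatively that every $\partial U(a)$ with $a\neq a_1$ is \emph{strictly} driven toward $\partial U(a_1)$, uniformly in $a$ away from $a_1$, and rule out pinning at any auxiliary stationary configuration in the window $(a_1-l_1,a_1+l_2)$. The non-convexity of $\Omega$ near the waist of the paraboloidal strip makes the right-angle contact at $\partial\Omega$ delicate, and the threshold $c<r_{\min}^{-1}$ enters precisely here to preserve the sign of $\kappa+c$ throughout the region of interest. Implementing this rigorously will likely require the explicit geometry of the stationary family encoded in Theorem~\ref{thm:sta}, coupled with a strong maximum principle or an Ilmanen-type set-theoretic avoidance principle for the Neumann level-set flow.
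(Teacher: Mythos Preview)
Your overall strategy---sandwich $u$ between barriers built from the one-parameter family $\{U(a)\}$ and invoke comparison---is exactly the paper's. The difference is in execution, and the paper's route is more direct and avoids what you flag as the hard part.

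You propose to evolve auxiliary initial data $\ul u_0,\ol u_0$ under the flow and then argue that the resulting level sets $E_t^\pm$ converge monotonically to $\ol{U(a_1)}$ via an avoidance principle. (A small inconsistency: you call $\ul u_0,\ol u_0$ ``smooth'' yet require every super-level set in $(\alpha,\beta)$ to equal a single fixed set; only step functions have that property, so you should work with characteristic functions from the start, as the paper does.) You correctly identify the monotone-attraction step as the delicate point.

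The paper bypasses that step entirely. Rather than evolving anything and analyzing the limit, it \emph{prescribes} the moving barriers explicitly: set $\ul a(t)=a_1-l_1 e^{-\delta t}$, $\ol a(t)=a_1+l_2 e^{-\delta t}$, and take $\ul u(x,t)=\chi_{\ol{U(\ul a(t))}}$, $\ol u(x,t)=\chi_{U(\ol a(t))}$ directly as sub/supersolutions of the level-set PDE. The verification (Lemma~\ref{lem:sup}) is a short computation: one checks the normal-velocity inequality $\frac{dX}{dt}\cdot\Vec{\mathbf{n}}\le -\frac{1}{r(\ul a(t))}+c$ on the moving arc using the explicit formulas for $p(a)$ and $r(a)$, together with $r'(a_1)<0$, which makes the ratio $(a_1-a)\big/\bigl(r(a_1)^{-1}-r(a)^{-1}\bigr)$ bounded on $[a_1-l_1,a_1+l_2]$ and produces an admissible $\delta_0>0$. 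Once these are known to be sub/supersolutions with the correct initial ordering, comparison plus $\ul a(t),\ol a(t)\to a_1$ finishes immediately.

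So your plan would work, but the quantitative attraction you were prepared to fight for via an abstract avoidance/strong-maximum-principle argument is handled in the paper by an elementary ODE-style calculation on the explicit curve family---this is precisely where the closed form of $r(a)$ pays off.
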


\subsection{Set-theoretic stationary solutions}

For $a>0$, consider a family of curves with constant curvature in $\Omega$,
\begin{align}
\label{eqn:cur}
X(a, \theta) = (X_1(a, \theta), X_2(a, \theta)) = p(a) + r(a) (\cos \theta, \sin \theta), \ \ \ |\theta| < \arctan(ma),
\end{align}
where we choose $p(a)$, $r(a)$ so that the curve 
\[
\Gamma:=\{(X_1(a, \theta), X_2(a, \theta))\,:\, |\theta| < \arctan(ma)\}\cup
\{(-X_1(a, \theta), X_2(a, \theta))\,:\, |\theta| < \arctan(ma)\}
\] 
has a constant curvature, and is perpendicular to the boundary $\partial\Omega$. 
Indeed, set  
\[
p(a):=\left(\frac{a}{2}-\frac{k}{ma},0\right). 
\]
Then, we see that the tangent line for $\{(x_1,x_2)\mid x_2=f(x_1)\}$ at $x_1=a$ goes through $p(a)$.  
Moreover, setting 
\[
r(a):=\left|\left(a,\frac{ma^2}{2}+k\right)-p(a)\right|=\left(\frac{a}{2}+\frac{k}{ma}\right)\sqrt{m^2a^2+1}, 
\]
by elementary geometry, we can check that 
\begin{equation*}\label{eq:Neumann}
\Gamma\bot \partial\Omega. 
\end{equation*} 
The parameter $a$ will be specified so that 
\[
c=\frac{1}{r(a)}
\]
in Lemma \ref{lem:ss}.

\begin{figure}[h]\label{illust}
	\centering
	\begin{subfigure}[t]{0.45\textwidth}
	\includegraphics[width=\textwidth]{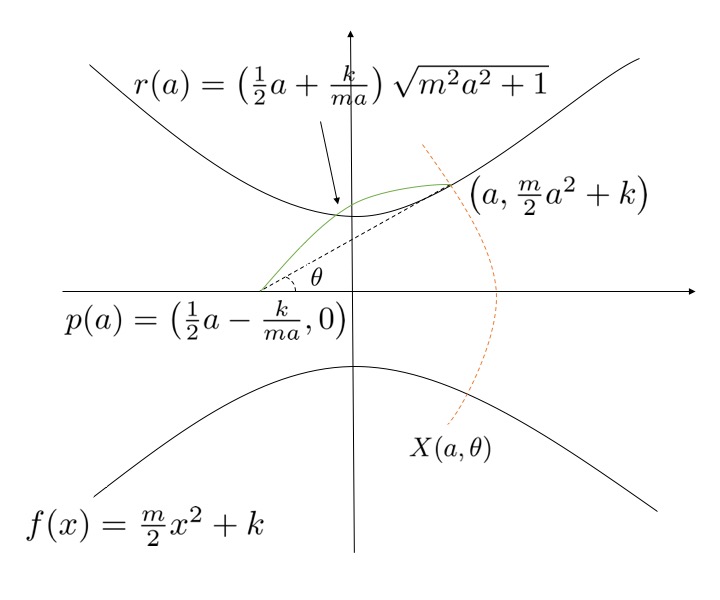}
	\end{subfigure}
	\hspace{0.5cm}
	\begin{subfigure}[t]{0.45\textwidth}
	\includegraphics[width=\textwidth]{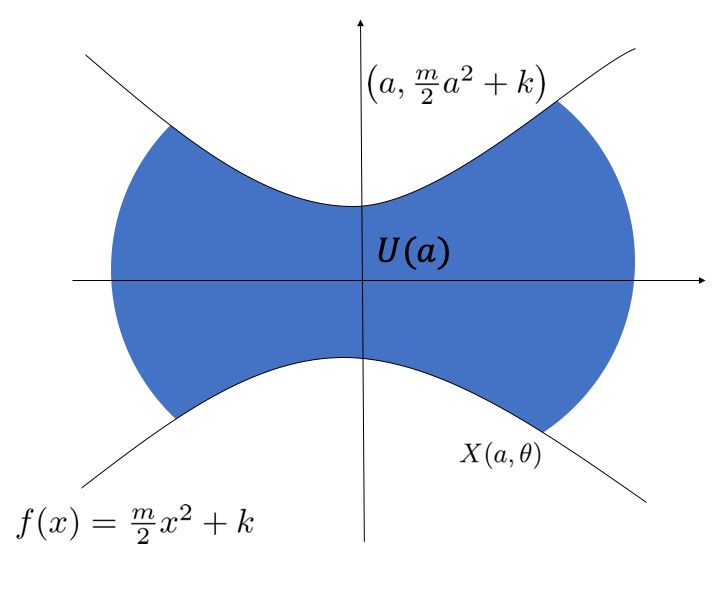}
	\end{subfigure}
	\caption{Illustrations of \eqref{eqn:cur} and \eqref{eqn:ua}}
\end{figure}

The following definition is taken from \cite[Definition 5.1.1]{G}.
\begin{definition}
\label{def:set}
Let $G$ be a set in $\mathbb{R}^n \times J$, where $J$ is an open interval in $(0,T)$. 
We say that $G$ is a set-theoretic subsolution (resp., supersolution) of 
\begin{equation}\label{eq:sur}
V=\kappa+c \quad\text{on} \ \Gamma_t\quad\text{with} \quad \Gamma_t\bot \partial\Omega 
\end{equation}
if $\chi_G^{\ast}$ is a viscosity subsolution (resp., $(\chi_G)_\ast$ is a viscosity supersolution) of \eqref{eq:levelset1}--\eqref{eq:levelset2} in $\mathbb{R}^n \times J$, 
where $\chi_G(x,t)=1$ if $(x,t)\in G$, and $\chi_G(x,t)=0$ if $(x,t)\not\in G$, and $\chi_G^{\ast}$ and $(\chi_G)_\ast$ denote the upper semicontinuous envelope and the lower semicontinuous envelope of $\chi_G$, respectively.  
If $G$ is both a set-theoretic subsolution and supersolution of \eqref{eq:sur}, $G$ is called a set-theoretic solution of \eqref{eq:sur}.
\end{definition}

Set
\begin{align}\label{eqn:ua}
U(a) := \{ (x_1,x_2) \in \Omega : |x_1| < X_1(a, \theta), |x_2| < X_2(a, \theta), |\theta| < \arctan(ma) \}, 
\end{align}
and
\begin{equation}\label{func:rmin}
r_{\min}:= \inf\{r(a):a>0\}.
\end{equation} 
Then, $r_{\min}$ is positive since $r$ is a continuous positive function in $(0, \infty)$ and 
\begin{align}
\label{eqn:rmin}
\lim_{a \rightarrow 0} r(a) = \lim_{a \rightarrow \infty} r(a) = \infty. 
\end{align}
Moreover, by direct computation, we have
\[
r'(a) = \frac{1}{\sqrt{m^2 a^2 + 1}} \left( m^2 a^2 + \frac{1}{2} - \frac{k}{ma^2} \right).
\]
Therefore, $r$ has only one critical point $a_* = \frac{1}{2m} \sqrt{ -1 + \sqrt{1+ 16mk}}$ in $(0, \infty)$ and $r_{\min} = r(a_*)$. 
In addition,
\begin{align}
\label{eqn:sta2}
\hbox{$r'(a)<0$ if $a<a_*$, and $r'(a)>0$ if $a>a_*$.}
\end{align}

\begin{lemma}
\label{lem:ss}
If $c = \frac{1}{r(a)}$ for some $a>0$, then $U(a)$ is a set-theoretic stationary solution of \eqref{eq:levelset1}--\eqref{eq:levelset2}.
\end{lemma}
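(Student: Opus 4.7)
The plan is to verify the two classical geometric conditions that underlie the set-theoretic formulation of Definition~\ref{def:set}: stationarity of the active boundary $\partial U(a) \cap \Omega$ under the forced curvature flow $V = \kappa + c$, and the right-angle contact of that boundary with $\partial \Omega$. Once these are established, the passage to the viscosity sub/super-solution inequalities for $\chi^*_{U(a)}$ and $(\chi_{U(a)})_*$ is standard.

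First I would identify the structure of $\partial U(a)$. By \eqref{eqn:ua}, the boundary of $U(a)$ inside $\overline \Omega$ is the union of two smooth circular arcs $\Gamma_+ = \{p(a) + r(a)(\cos\theta, \sin\theta) : |\theta| < \arctan(ma)\}$ and its mirror image $\Gamma_-$, each of radius $r(a) = 1/c$, together with portions of $\partial \Omega$ above and below. Since the center $p(a)$ lies on the $x_1$-axis and $U(a)$ extends to the rightmost point $(p(a)_1 + r(a), 0) \in \Gamma_+$, a direct check shows $p(a) \in U(a)$. Hence the inward normal from $U(a)$ at any point of $\Gamma_+$ points radially toward $p(a)$.

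Second, I would verify the stationary PDE on the smooth part of the boundary. Choose a local defining function $u$ for $U(a)$ near $\Gamma_+$ with $U(a) = \{u > 0\}$, for instance $u(x) = r(a) - |x - p(a)|$. Then $Du/|Du| = (p(a) - x)/r(a)$ is the inward unit normal, and a direct computation gives $\div(Du/|Du|) = -1/|x - p(a)| = -1/r(a) = -c$ on $\Gamma_+$. Consequently $|Du|(\div(Du/|Du|) + c) = 0$ on $\Gamma_+$, which is exactly the stationary form of \eqref{eq:levelset1}, and the analogous identity holds on $\Gamma_-$ by symmetry. The perpendicularity condition is also immediate: by the choice of $p(a) = (a/2 - k/(ma), 0)$, the tangent line to $\partial \Omega$ at the contact point $(a, f(a))$ passes through $p(a)$, so the radius from $p(a)$ to $(a, f(a))$ is parallel to that tangent, and hence the tangent to $\Gamma_+$ at $(a, f(a))$ coincides with the direction of $\Vec{\mathbf{n}}$. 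This gives $\frac{\partial u}{\partial \Vec{\mathbf{n}}} = 0$ at the contact point in the classical sense, and the analogous statement holds at the remaining three contact points by symmetry.

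Finally, I would translate these facts into the set-theoretic viscosity framework of Definition~\ref{def:set}. At interior smooth points of $\Gamma_\pm$, any $C^2$ test function $\varphi$ touching $\chi^*_{U(a)}$ from above at $\hat x \in \Gamma_+$ satisfies $\{\varphi \geq 1\} \supseteq \overline{U(a)}$ near $\hat x$, so the level set $\{\varphi = \varphi(\hat x)\}$ is tangent to $\Gamma_+$ at $\hat x$ from outside; the usual curvature comparison, combined with the identity from the previous step, yields the subsolution inequality, and an analogous argument gives the supersolution inequality for $(\chi_{U(a)})_*$. The main obstacle is the verification at the four corner points where $\Gamma_\pm$ meets $\partial \Omega$, since the envelopes are not smooth there and neither is $\partial U(a)$. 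The key is to invoke the $\min/\max$-with-Neumann clause in the definition of viscosity sub/super-solutions, with the perpendicularity ensuring that for any $C^2$ test function at such a corner, either its gradient satisfies the Neumann inequality on $\partial \Omega$, or its zero-level set extends compatibly into $\Omega$ and reduces to the interior curvature case already verified.
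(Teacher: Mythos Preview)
Your geometric verification---that the two arcs have curvature $\kappa = -1/r(a) = -c$ and meet $\partial\Omega$ orthogonally because the radius $p(a)$ to $(a,f(a))$ is tangent to $\partial\Omega$---is exactly what the paper does, and is the heart of the argument. The route diverges only in the final step: the paper invokes the characterization result \cite[Theorem~5.1.2]{G}, which says directly that a set is a set-theoretic solution of \eqref{eq:sur} if and only if $0=\kappa+c$ on the smooth interface together with the right-angle condition, so the proof ends once the geometry is checked. You instead attempt to verify Definition~\ref{def:set} by hand, testing $\chi_{U(a)}^*$ and $(\chi_{U(a)})_*$ against $C^2$ functions. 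At interior smooth points of $\Gamma_\pm$ your curvature-comparison sketch is fine, but your treatment of the four contact points is only a heuristic: the dichotomy ``either the Neumann inequality holds or the level set extends compatibly and reduces to the interior case'' is precisely the nontrivial content of the characterization theorem you are trying to bypass, and making it rigorous requires a genuine case analysis on the test function's gradient direction relative to $\Vec{\mathbf{n}}$. Citing \cite[Theorem~5.1.2]{G} is both shorter and avoids this gap.
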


\begin{proof}
As a consequence of the nice characterization of set-theoretic solutions in \cite[Theorem 5.1.2]{G}, $U(a)$ is a set-theoretic stationary solution of \eqref{eq:sur}  if and only if $0 = \kappa + c$ on $\partial U(a) \cap \Omega$ and the right angle condition holds. 
The equality follows from the fact that $\partial U(a) \cap \Omega$ contains two arcs of two circles of the same radius $r(a)$ and curvature $\kappa = -r(a)^{-1} = -c$. 

On the other hand, these arcs intersect with $\partial \Omega$ at four points $( a, \pm f(a))$, $(-a, \pm f(a))$. 
By symmetry, it suffices to prove the right angle condition at $(a,f(a))$. Notice that 
\begin{align*}
(a,f(a)) =  (X_1(a, \arctan(ma)), X_2(a, \arctan(ma))) = p(a) + \frac{r(a)}{\sqrt{m^2 a^2 + 1}} \cdot (1, ma).
\end{align*} 
Therefore, the line joining $(a,f(a))$ and $p(a)$, the center of the arc, is tangent to $\partial \Omega$ at $(a,f(a))$. 
Thus, $\partial U(a) \cap \Omega$ satisfies the right angle condition at $(a,f(a))$.
\end{proof}

\begin{theorem}\label{thm:sta}
If $c \in (0,\frac{1}{r_{\min}})$, then there exist two positive constants $a_1 < a_2$ such that $U(a_i)$ is a set-theoretic stationary solution of \eqref{eq:sur} for $i=1,2$.
\end{theorem}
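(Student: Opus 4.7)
The plan is to reduce the existence of the two radii $a_1<a_2$ to the existence of two solutions of the equation $r(a)=1/c$, and then invoke Lemma \ref{lem:ss}.

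First, I would recall from the paragraph before Lemma \ref{lem:ss} that the function
\[
r(a) = \left(\frac{a}{2}+\frac{k}{ma}\right)\sqrt{m^2a^2+1}
\]
is continuous and positive on $(0,\infty)$, satisfies $\lim_{a\to 0^+} r(a) = \lim_{a\to\infty} r(a) = +\infty$ by \eqref{eqn:rmin}, and has exactly one critical point at $a_\ast = \frac{1}{2m}\sqrt{-1+\sqrt{1+16mk}}$. By \eqref{eqn:sta2}, $r$ is strictly decreasing on $(0,a_\ast]$ and strictly increasing on $[a_\ast,\infty)$, and its global minimum is $r(a_\ast)=r_{\min}$.

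Next, since $c\in(0,1/r_{\min})$, we have $1/c > r_{\min} = r(a_\ast)$. The intermediate value theorem then gives an $a_1\in(0,a_\ast)$ with $r(a_1)=1/c$ (because $r$ decreases continuously from $+\infty$ down to $r_{\min}$ on this interval) and an $a_2\in(a_\ast,\infty)$ with $r(a_2)=1/c$ (because $r$ increases continuously from $r_{\min}$ back to $+\infty$). By the strict monotonicity on each side of $a_\ast$, the values $a_1$ and $a_2$ are uniquely determined, and clearly $a_1<a_\ast<a_2$.

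Finally, with $a=a_i$ we have $c=1/r(a_i)$ for $i=1,2$, so Lemma \ref{lem:ss} immediately implies that both $U(a_1)$ and $U(a_2)$ are set-theoretic stationary solutions of \eqref{eq:sur}, completing the proof. The argument is essentially a one-variable calculus application of the intermediate value theorem, so no real obstacle is expected; the only subtlety is to make sure the monotonicity and the limits at $0$ and $\infty$ have already been established, which they have in the text immediately preceding the statement.
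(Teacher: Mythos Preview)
Your proposal is correct and follows essentially the same approach as the paper: use the limits \eqref{eqn:rmin} and the monotonicity \eqref{eqn:sta2} together with the intermediate value theorem to find $a_1<a_\ast<a_2$ solving $r(a_i)=1/c$, then invoke Lemma~\ref{lem:ss}. The paper's proof is simply a terser version of what you wrote.
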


\begin{proof}
Thanks to \eqref{func:rmin}--\eqref{eqn:sta2}, there exist two positive constants $a_1, a_2$ with $a_1<a_*<a_2$ such that
\begin{align}
\label{eqn:sta}
r(a_1) = r(a_2) = \frac{1}{c}.
\end{align}
By Lemma~\ref{lem:ss}, $U(a_i)$ is a set-theoretic stationary solution of \eqref{eq:sur} for $i=1,2$.
\end{proof}

\subsection{Stability}
Let $a_i$ be the constants given by Theorem \ref{thm:sta} for $i=1,2$. 
In this section, we prove that $U(a_1)$ given by \eqref{eqn:ua} is a set-theoretic solution which is stable in the sense of Theorem \ref{thm:conv}.

\begin{lemma}\label{lem:sup}
Let $l_1\in(0,a_1)$, $l_2\in(0,a_2-a_1)$ and $\delta>0$. 
Set $\underline{a}(t):= a_1 - l_1 e^{-\delta t}$ 
and $\overline{a}(t):= a_1+l_2 e^{-\delta t}$.  
There exists $\delta_0=\delta_0(m,k,l_1,l_2)$ such that 
$U(\underline{a}(t))$ and $U(\overline{a}(t))$ are 
a set-theoretic subsolution and supersolution to \eqref{eq:sur} for all $\delta\in(0,\delta_0)$, respectively. 
\end{lemma}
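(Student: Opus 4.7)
The strategy is to verify the smooth geometric conditions $V\leq\kappa+c$ on $\partial U(\underline a(t))\cap\Omega$ and $V\geq\kappa+c$ on $\partial U(\overline a(t))\cap\Omega$, together with perpendicular contact at $\partial\Omega$; these will imply the set-theoretic sub/supersolution properties via the standard correspondence in \cite[Chapter 5]{G}. The perpendicular contact is automatic for every $a>0$ by the construction of $p(a)$ and $r(a)$, so only the normal-velocity inequality requires work.

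I would first compute both quantities on the right arc of $\partial U(a)\cap\Omega$, parametrized by $X(a,\theta)=p(a)+r(a)(\cos\theta,\sin\theta)$ with outward unit normal $\nu(\theta)=(\cos\theta,\sin\theta)$ (pointing away from the center $p(a)$, which lies on the side of $U(a)$). Since $p_2\equiv 0$, the outward normal velocity along $a=\underline a(t)$ is
\[
V(t,\theta)=\nu\cdot\partial_t X(\underline a(t),\theta)=\delta\bigl(a_1-\underline a(t)\bigr)\bigl[p_1'(\underline a(t))\cos\theta+r'(\underline a(t))\bigr],
\]
while the signed curvature with respect to $\nu$ is $\kappa=-1/r(\underline a(t))$, so using $c=1/r(a_1)$,
\[
\kappa+c=c-\frac{1}{r(\underline a(t))}=\frac{r(\underline a(t))-r(a_1)}{r(a_1)\,r(\underline a(t))}.
\]
Because $a_1-l_1<a_1<a_*$ and $r'<0$ on $(0,a_*]$ by \eqref{eqn:sta2}, $r(\underline a(t))>1/c$ and $\kappa+c>0$.

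The inequality $V\leq\kappa+c$ is then obtained by comparing linear-in-$\epsilon$ leading terms, where $\epsilon(t):=a_1-\underline a(t)=l_1e^{-\delta t}$. On the one hand $|V|\leq\delta\epsilon(t)M$ with $M:=\sup_{a\in[a_1-l_1,a_1]}\bigl(p_1'(a)+|r'(a)|\bigr)<\infty$. On the other hand, by L'H\^opital at $a=a_1$ (which requires $r'(a_1)\neq 0$, guaranteed by $a_1<a_*$), the ratio
\[
G(a):=\frac{c-1/r(a)}{a_1-a}
\]
extends continuously to $a=a_1$ with value $|r'(a_1)|c^2>0$, and stays strictly positive on $[a_1-l_1,a_1]$; hence $G\geq\mu'$ for some $\mu'=\mu'(m,k,l_1)>0$, giving $\kappa+c\geq\mu'\epsilon(t)$. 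The subsolution inequality holds uniformly in $(\theta,t)$ whenever $\delta M\leq\mu'$. The mirrored left arc is identical by the $x_1\mapsto-x_1$ symmetry.

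The supersolution analysis is entirely parallel. For $\overline a(t)\in(a_1,a_1+l_2]\subset(a_1,a_2)$, $r<1/c$ on $(a_1,a_2)$ so $\kappa+c<0$, while $\overline a'(t)<0$; both $|V|$ and $|\kappa+c|$ scale linearly in $\overline a(t)-a_1$. Applying L'H\^opital to $\widetilde G(a):=(1/r(a)-c)/(a-a_1)$ on $[a_1,a_1+l_2]$ gives a positive lower bound $\mu=\mu(m,k,l_2)$, and with $M':=\sup_{a\in[a_1,a_1+l_2]}\bigl(p_1'(a)+|r'(a)|\bigr)$ one obtains $V\geq\kappa+c$ as long as $\delta M'\leq\mu$. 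Setting
\[
\delta_0:=\min\left\{\frac{\mu'}{M},\,\frac{\mu}{M'}\right\}
\]
then satisfies the lemma. The main obstacle is that $V$ and $\kappa+c$ both vanish at rate $e^{-\delta t}$ as $t\to\infty$, so the inequality survives only by matching their linear coefficients in $\epsilon(t)$ uniformly in the arc parameter $\theta$; this is what forces $\delta$ small, and depends crucially on the nondegeneracy $r'(a_1)\neq 0$ (i.e., $a_1\neq a_*$), together with the fact that $a_1$ and $a_2$ are the only roots of $r=1/c$ so that $G,\widetilde G$ stay away from zero on the relevant closed intervals.
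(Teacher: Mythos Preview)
Your proof is correct and follows essentially the same approach as the paper: both reduce to the smooth normal-velocity inequality via \cite[Theorem 5.1.2]{G}, compute $V$ and $\kappa+c$ along the arcs, and exploit that both vanish linearly in $|a-a_1|$ with comparable rates (using continuity of the ratio on a closed interval, which relies on $r'(a_1)\neq0$). The only cosmetic difference is that the paper works with the reciprocal function $h(a)=(a_1-a)/\bigl(c-1/r(a)\bigr)$ on the single interval $[a_1-l_1,a_1+l_2]$ and bounds it from above, whereas you bound $G=1/h$ from below on the two subintervals separately; this yields the same $\delta_0$ up to bookkeeping.
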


\begin{proof}
We only prove that $U(\underline{a}(t))$ is a set-theoretic subsolution, since we can similarly prove that $U(\overline{a}(t))$ is a set-theoretic supersolution. 
Let $\tilde{X}(t):=X(\underline{a}(t),\theta)$. 
From the characterization of set-theoretic solutions in \cite[Theorem 5.1.2]{G}, it suffices to show that for $t \geq 0$,
\begin{align}\label{eqn:sup1}
\frac{d \tilde{X}}{d t} \cdot \Vec{\mathbf{n}} \leq -\frac{1}{r(\underline{a}(t))} + c \quad  \hbox{ for all } t>0, 
\end{align}
where 
$\Vec{\mathbf{n}}$ is the outward normal vector $\Vec{\mathbf{n}}$ of $U(\underline{a}(t))$, that is, 
$\Vec{\mathbf{n}}=(\cos \theta, \sin \theta)$.

Note that 
\[
\frac{d\tilde{X}}{dt} \cdot \Vec{\mathbf{n}} = \frac{\partial \underline{a}}{\partial t} \frac{\partial {X}}{\partial a} \cdot \Vec{\mathbf{n}} 
=
\delta l_1e^{-\delta t}  \frac{\partial {X}}{\partial a} \cdot \Vec{\mathbf{n}}
=\delta (a_1 - \underline{a}(t)) \frac{\partial {X}}{\partial a} \cdot\Vec{\mathbf{n}}.  
\]
Also, for any constant $L>0$, there exists $C = C(m,k,L)>0$ such that
\begin{align*}
&\frac{\partial {X}}{\partial a}(a,\theta) \cdot\Vec{\mathbf{n}}= p'(a) \cdot \Vec{\mathbf{n}} + r'(a) 
\le 
 |p'(a)| + r'(a) \\
 =&\, 
 \frac{1}{2}  + \frac{m^2 a^2 + \frac{1}{2}}{\sqrt{m^2 a^2 +1}} + \frac{mk}{m^2 a^2 +1 + \sqrt{m^2 a^2 +1}}
 \le C  
\end{align*}
for all $a\in(0,L)$ and $\theta\in(-\frac{\pi}{2}, \frac{\pi}{2})$.  
Therefore, 
\begin{align*}
\frac{d \tilde{X}}{d t} \cdot \Vec{\mathbf{n}} 
= \delta (a_1 - \underline{a}(t)) \frac{\partial {X}}{\partial a} \cdot\Vec{\mathbf{n}} \leq C\delta (a_1 - \underline{a}(t)).
\end{align*}
The observation \eqref{eqn:sta2} implies that $r(\underline{a}(t)) >r(a_1) = c^{-1}$ for all $t\geq0$, and thus we get 
\begin{align*}
\left( \frac{d \tilde{X}}{d t} \cdot \Vec{\mathbf{n}} \right) \left(-\frac{1}{r(\underline{a}(t))} + c \right)^{-1} \leq \delta C \frac{a_1-\underline{a}(t)}{\frac{1}{r(a_1)}-\frac{1}{r(\underline{a}(t))}}. 
\end{align*}
Thus, \eqref{eqn:sup1} holds for $\delta \in (0, \delta_0)$, where 
\begin{align*}
\label{eqn:sup5}
\delta_0 := \left( C \sup_{a \in [a_1 - l_1, a_1 + l_2]} h(a) \right)^{-1}.
\end{align*}
Here the function $h:[a_1 - l_1, a_1 + l_2] \rightarrow \mathbb{R}$ is given by
\[
h(a) := 
\begin{cases}
\dfrac{a_1-a}{\frac{1}{r(a_1)}-\frac{1}{r(a)}} \quad &\hbox{ for } a \in [a_1 - l_1, a_1 + l_2] \setminus \{a_1\},\\ \\
\dfrac{-r^2(a_1)}{r'(a_1)} \quad &\hbox{ for } a = a_1.
\end{cases}
\]
Since $a_1 + l_2 < a_2$, by \eqref{eqn:sta2} we have $r(a) \neq r(a_1)$ in $[a_1 - l_1, a_1 + l_2] \setminus \{a_1\}$ and $r'(a_1)<0$. 
Therefore, $h$ is well-defined and continuous in $[a_1 - l_1, a_1 + l_2]$. 
Thus, $h$ is bounded in $[a_1 - l_1, a_1 + l_2]$, and hence, $\delta_0>0$ is well-defined, 
which implies that \eqref{eqn:sup1} holds for all $\delta\in (0,\delta_0)$.
\end{proof}

\begin{proof}[Proof of Theorem~\ref{thm:conv}]
We let $\alpha = 0$ and $\beta = 1$ for simplicity. 
Set 
\begin{align*}
\ul{u}(x,t):= \chi_{\overline{U(\underline{a}(t))}}(x) \quad \hbox{ and } \quad \ol{u}(x,t):= \chi_{U(\overline{a}(t))}(x)
\end{align*}
for $(x,t) \in \ol \Omega \times [0,\infty)$, where $\underline{a}$ and $\overline{a}$ are the functions defined in Lemma \ref{lem:sup}.  
By Lemma \ref{lem:sup}, 
we see that   
$\ul{u}$ and $\ol{u}$ are a subsolution and a supersolution of \eqref{eq:levelset1}--\eqref{eq:levelset2}, respectively. 
Due to \eqref{eqn:1conv}, we get
\begin{align*}
\ul{u}(\cdot, 0) = \chi_{\overline{U(\underline{a}(0))}} \leq u_0 \leq \chi_{U(\overline{a}(0))} = \ol{u}(\cdot, 0) \hbox{ on } \overline\Omega.
\end{align*}

In addition, since $$U(a) \subset V(a):= [-(|p(a)|+r(a)),|p(a)|+r(a)] \times [-f(a), f(a)]$$ by construction for $p(a)$ and $r(a)$ given in \eqref{eqn:cur} and $f(a) = \frac{m}{2} a^2 + k$, we obtain
\begin{align*}
\supp (\ul{u}) \subset \bigcup_{a \in [a_1 - l_1, a_1]} V(a) \times [0,\infty)  \hbox{ and } \supp (\ol{u}) \subset \bigcup_{a \in [a_1, a_1 + l_2]} V(a) \times [0,\infty).
\end{align*}
As $|p(\cdot)|+r(\cdot)$ and $f$ are continuous on $[a_1 - l_1, a_1 + l_2]$, there exists a constant $R_0>0$ satisfying \eqref{condition:cp}.

By the comparison principle for \eqref{eq:levelset1}--\eqref{eq:levelset3}, Proposition~\ref{prop:comp}, we get 
\begin{align*}
\label{eqn:conv1}
\ul{u}(\cdot, t) \leq u(\cdot, t) \leq \ol{u}(\cdot, t) 
\quad\text{on} \ \ol \Omega \quad \text{for all} \ t>0. 
\end{align*}
On the other hand, since both $a_1 - l_1 e^{-\delta t}$ and $a_1 + l_2 e^{-\delta t}$ converge to $a_1$ as $t$ goes to infinity,
\begin{align*}
\lim_{t \rightarrow \infty} \ul{u}(x,t) = \lim_{t \rightarrow \infty} \ol{u}(x,t) = 1\quad \hbox{ for } x \in U(a_1),
\end{align*}
and 
\begin{align*}
\lim_{t \rightarrow \infty} \ul{u}(x,t) = \lim_{t \rightarrow \infty} \ol{u}(x,t) = 0 \quad \hbox{ for } x \in \overline{\Omega}\setminus\overline{U(a_1)}, 
\end{align*}
which finish the proof. 
\end{proof}

\section*{Acknowledgements} 
The authors are extremely grateful to the anonymous referee for his/her careful reading and very constructive comments, which help much to improve the presentation of the paper.



\end{document}